\newcommand{\twostac}[2]{\left(\begin{array}{c}{#1}\\{#2}\end{array}\right)}
\let\a=\alpha
\let\b=\beta
\let\d=\delta
\let\e=\varepsilon
\let\f=\varphi
\let\F=\Phi
\let\g=\gamma
\let\G=\Gamma
\let\i=\iota
\let\k=\kappa
\let\l=\lambda
\let\L=\Lambda
\let\m=\mu
\let\n=\nu
\let\N=\nabla
\let\r=\rho
\let\S=\Sigma
\let\w=\omega
\let\u=\theta
\let\y=\psi
\def\bbC{{\mathbb C}}
\def\bbD{{\mathbb D}}
\def\bbT{{\mathbb T}}
\def\bbV{{\mathbb V}}
\def\cL{{\mathcal{L}}}
\def\cR{{\mathcal{R}}}
\def\cV{{\mathcal{V}}}
\def\gg{{\mathfrak{g}}}
\let\da=\downarrow
\let\ua=\uparrow
\let\os=\oplus
\let\ot=\otimes
\def\endo{\operatorname{End}}
\def\eye{\sqrt{-1}}
\def\id{\operatorname{id}}
\def\spin{\operatorname{Spin}}
\def\sym{\operatorname{Sym}}
\def\dc{\hbox{$\nabla\mkern-12mu$\raisebox{0.3ex}{$/$}$\;$}}
\def\congn{\cong_{\spin(n)}}
\let\ptl=\partial
\def\sideremark#1{\ifvmode\leavevmode\fi\vadjust{\vbox to 0pt{\vss
 \hbox to 0pt{\hskip\hsize\hskip1em
 \vbox{\hsize2cm\tiny\raggedright\pretolerance10000
 \noindent #1\hfill}\hss}\vbox to8pt{\vfil}\vss}}}%
\def\Lap{\triangle}
\newtheorem{thm}{Theorem}[section]
\newtheorem{rmk}[thm]{Remark}
\newtheorem{cor}[thm]{Corollary}
\newtheorem{lem}[thm]{Lemma}
\title{Spectra of Higher Spin Operators on the Sphere}
\author{Doojin Hong}
\date
\begin{document}

\maketitle
\begin{abstract}
We present explicit formulas for the spectra of higher spin operators (\cite{BSSV:02}) on the subbundle of the bundle of spinor-valued trace free symmetric tensors that are annihilated by the Clifford multiplication over the standard sphere in odd dimension. In even dimensional case, we give the spectra of the square of such operators. The Dirac and Rarita-Schwinger operators are zero-form and one-form cases, respectively. We also give eigenvalue formulas for the conformally invariant differential operators of all odd orders on the subbundle of the bundle of spinor-valued forms that are annihilated by the Clifford multiplication in both even and odd dimensions on the sphere.  
\end{abstract}
\section{Introduction}
The higher spin operators are generalized gradients like the Dirac and Rarita-Schwinger operators (\cite{BSSV:02}, \cite{SW:68}). They are defined on the subbundle of the bundle of spinor-valued trace free symmetric tensors that are annihilated by the Clifford multiplication ((\ref{CM})). On the standard sphere $S^n$ with $n$ odd, they act as a constant on each $\spin(n+1)$ irreducible summand of the section space. We apply the spectrum generating technique (\cite{BOO:96}) to get the eigenvalue quotients between $\spin(n+1)$ summands and thus the spectral function. Then Theorem \ref{DB} and an easy computation of the eigenvalue on a single $\spin(n+1)$ summand lead us to the complete eigenvalue formulas of the operators. In even dimensional case, these operators map positive spinors to negative spinors and vice versa. So we consider the square of the operators and get eigenvalue formulas. 
\par
Similarly, we consider the spin operators on the subbundle of spinor-forms that are annihilated by the Clifford multiplication. In this setting, only two different $\spin(n+1)$ isotypic types ((\ref{decomp})) appear and we give eigenvalue formulas for all odd order conformally invariant differential operators in both even and odd dimensions.      
\section{Conformally covariant operators}
Let $(M,g)$ be an $n$-dimensional pseudo-Riemannian manifold. 
If $f$ is a (possibly local) diffeomorphism on $M$, we denote by $f\cdot$ the 
natural action of $f$ on tensor fields which acts on vector fields as 
$f\cdot X=(df)X$ and on covariant tensors as $f\cdot \phi=(f^{-1})^*\phi$.  

A vector field $T$ is said to be {\it conformal} with {\it conformal factor} 
$\omega\in C^{\infty}(M)$ 
if 
$$
\mathcal{L}_T\, g=2\omega g\, ,
$$ 
where $\mathcal{L}$ is the Lie derivative and $g$ is the metric tensor. The conformal vector fields form a 
Lie algebra $\mathfrak{c}(M,g)$.
A conformal transformation on $(M,g)$ is a (possibly local) diffeomorphism $h$ 
for which $h\cdot g=\Omega^2 g$ for some positive function $\Omega\in 
C^{\infty}(M)$. The global conformal transformations form a group $\mathscr{C}(M,g)$. 
Let $\mathscr{T}$ be a space of $C^\infty$ tensor fields of some fixed type over $M$. For example, we can take 2-forms or trace-free symmetric covariant three-tensors. 
We have representations (\cite{Branson:96}) defined by 
\begin{align}\label{int}
\mathfrak{c}(M,g)\stackrel{U_a}{\longrightarrow} \mbox{ End }\mathscr{T},\quad
&U_a(T)=\mathcal{L}_T+a\omega\, \mbox{ and}\\
\mathscr{C}(M,g)\stackrel{u_a}{\longrightarrow} \mbox{ Aut }\mathscr{T}, \quad
&u_a(h)=\Omega^a h\cdot \nonumber
\end{align}
for $a\in \bbC$.

Note that if a conformal vector field $T$ integrates to a one-parameter group of 
global conformal transformation $\{h_{\e}\}$, then
$$
\{U_a(T)\phi\}(x)=\frac{d}{d\e}\Big|_{\e=0}\{u_a(h_{-\e})\phi\}(x)\, .
$$
In this sense, $U_a$ is the infinitesimal representation corresponding to $u_a$. 
\\
A differential operator $D:C^{\infty}(M)\rightarrow C^{\infty}(M)$ is said to be 
{\it infinitesimally conformally covariant of bidegree} $(a,b)$ if 
$$
DU_a(T)\phi=U_b(T)D\phi
$$
for all $T\in \mathfrak{c}(M,g)$ and $D$ is said to be {\it conformally covariant of bidegree} $(a,b)$ if 
$$
Du_a(h)\phi=u_b(h)D\phi
$$
for all $h\in \mathscr{C}(M,g)$. 

To relate conformal covariance to conformal invariance, we recall that 
conformal weight of a bundle $V$ with the induced bundle metric $g_{{}_V}$ from $g$ is $r$ 
iff 
$$
\widehat{g}=\Omega^2 g \Longrightarrow \widehat{g}_{{}_V}=\Omega^{-2r} g_{{}_V}\, .
$$
Tangent bundle, for instance, has conformal weight -1. Let us denote a bundle $V$ with 
conformal weight $r$ by $V^r$. Then we can impose new conformal weight $s$ on $V^r$ by 
taking tensor product of it with the bundle $I^{(s-r)/n}$ of scalar $((s-r)/n)$-densities (\cite{Branson:97}). Now if 
we look at an operator of bidegree $(a,b)$ as an operator from the bundle with conformal weight $-a$ to the bundle with conformal weight $-b$, the operator becomes conformally 
invariant.

As an example, let us consider the conformal Laplacian on $M$: 
\begin{equation*}
Y=\triangle+\frac{n-2}{4(n-1)}\text{Scal}, 
\end{equation*}
where $\triangle=-g^{ab}\nabla_a\nabla_b$ and Scal is the scalar curvature. 
Note that $Y: C^\infty(M) \rightarrow C^\infty(M)$ is conformally covariant of bidegree $((n-2)/2,(n+2)/2)$. That is,
\begin{equation*}
\widehat{Y}=\Omega^{-\frac{n+2}{2}}Y\mu(\Omega^{\frac{n-2}{2}})\, ,
\end{equation*}
where $\widehat{Y}$ is $Y$ evaluated in $\widehat{g}$ and 
$\mu(\Omega^{\frac{n-2}{2}})$ is multiplication by $\Omega^{\frac{n-2}{2}}$.
If we let $V=C^\infty(M)$ and view $Y$ as an operator 
\begin{equation*}
Y: V^{-\frac{n-2}{2}} \rightarrow V^{-\frac{n+2}{2}}\, ,
\end{equation*}
we have, for $\phi\in V^{-\frac{n-2}{2}}$, 
\begin{equation*}
\widehat{Y}\,\,\widehat{\phi}=\widehat{Y\phi}\, ,
\end{equation*}
where $\widehat{Y}$, $\widehat{\phi}$, and $\widehat{Y\phi}$ are $Y$, $\phi$, and $Y\phi$ computed in $\widehat{g}$, respectively.
\section{Dominant weights}\label{VB}
Let $\lambda$ be a dominant weight of irreducible Spin($n$) representation. That is, 
$$\lambda=(\l_1,\ldots,\l_l)\in \mathbb{Z}^l\cup(1/2+\mathbb{Z})^l,\,\quad l=[n/2]
$$ 
satisfying the inequality constraint (dominant condition)
\begin{equation*}
\begin{array}{ll}
\l_1\ge\ldots\ge\l_l\ge0,&n\text{ odd},\\
\l_1\ge\ldots\ge\l_{l-1}\ge |\l_l|,&n\text{ even}.
\end{array}
\end{equation*}
$\l$ is identified with the highest weight of the irreducible representation of Spin($n$)  (\cite{IT:78}). We shall denote by $V(\l)$ the representation with the highest weight $\l$. Those $\lambda\in\mathbb{Z}^l$ are exactly the representations that factor through SO($n$). For example, $V(1,0,\dots,0)$ and $V(1,1,1,0,\dots,0)$ are the defining representation and the three-form representation of SO($n$), respectively and $V(\frac{1}{2},\dots,\frac{1}{2})$ is the spinor representation in odd dimensional case.

If $M$ is an $n$-dimensional smooth manifold with Spin($n$) structure and $\mathcal{F}$ is the bundle of spin frames, we denote by $\mathbb{V}(\l)$ the associated vector bundle $\mathcal{F}\times_\l V(\l)$. 
\section{Intertwining relation}
Let $G=\text{Spin}_0(n+1,1)$ be the identity component of the $\text{Spin}(n+1,1)$ and $\mathfrak{g}=\mathfrak{k}+\mathfrak{s}$ be a Cartan decomposition of the Lie algebra $\mathfrak{g}$ of $G$. Then, in an Iwasawa decomposition $G=KAN$, the maximal compact subgroup $K$ of $G$ is a copy of $\text{Spin}(n+1)$. Let $M$ be the centralizer of the Lie algebra $\mathfrak{a}$ of $A$ in $K$. Then $M$ is a copy of $\text{Spin}(n)$ and $P=MAN$ is a maximal parabolic subgroup of $G$. Note that $G/P=K/M$ is diffeomorphic to the sphere $S^n$ (\cite{BOO:96}). 

Let $V(\lambda)$ be a finite dimensional irreducible representation of $M$. Consider the $G$ module $\mathcal{E}(G;\lambda,\nu)$ of $C^\infty$ functions
\begin{equation*}
F: G\rightarrow V(\lambda) \text{ with }F(gman)=a^{-\nu-\rho}\lambda(m)^{-1}F(g), \quad g\in G, m\in M, a\in A, n\in N,
\end{equation*}
where $\rho$ is half the sum of the positive $(\mathfrak{g},\mathfrak{a})$ roots. This space is in one-to-one correspondence with the space of smooth sections of $\mathbb{V}(\lambda)$, the $K$ module $\mathcal{E}(K;\lambda|_{K\cap M})$ of $C^\infty$ functions
\begin{equation*}
f:K\rightarrow V(\lambda) \text{ with }f(km)=\lambda(m)^{-1}f(k),\quad k\in K, m\in M.
\end{equation*}
The $K$-finite subspace $\mathcal{E}_K(G;\lambda,\nu)\cong_K \mathcal{E}_K(K;\lambda|_{K\cap M})$ is defined as 
\begin{equation}\label{decomp}
\bigoplus_{\alpha\in \hat{K},\, \alpha \downarrow \lambda}\mathcal{V}(\alpha),
\end{equation}
where $\hat{K}$ is the set of dominant Spin($n+1$) weights and $\mathcal{V}(\alpha)$ is the $\alpha$-isotypic component satisfying the classical branching rule of $K$ and $M$ (\cite{Branson:92}):
\begin{equation*}
\alpha\downarrow\lambda\text{ iff }\alpha_1-\lambda_1\in \mathbb{Z}\text{ and }
\begin{cases} \alpha_1\ge\lambda_1\ge\alpha_2\ge \cdots \ge\lambda_l\ge |\alpha_{l+1}|, & \text{n odd}\\
\alpha_1\ge\lambda_1\ge\alpha_2\ge \cdots \ge\lambda_{l-1}\ge \alpha_l\ge |\lambda_l|, &\text{n even}.
\end{cases}
\end{equation*}
The conformal action of $G$ and its infinitesimal representation correspond to those in (\ref{int}).

Let $A=A_{2r}$ be an intertwinor of order $2r$ of the $(\gg,K)$ representation. If $X\in\gg$ with its conformal factor $\w$, $A$ is a $K$-map satisfying
the intertwining relation
\begin{equation}\label{Int-rel}
A\left(\tilde{\cL}_X+\left(\frac{n}2-r\right)\omega\right)=
\left(\tilde{\cL}_X+\left(\frac{n}2+r\right)\omega\right)A,
\end{equation}
where $\tilde{\cL}_X$ is the {\em reduced Lie derivative} (\cite{Branson:97}), $\tilde{\cL}_X=\cL_X+(l-m)\omega$ on tensors of $\twostac{l}{m}$-type ($l$ contravariant and $m$ covariant). 

\section{Higher spin operators}\label{Hi}
Let $\bbV(\l)$ be an irreducible associated vector bundle (\ref{VB}). The covariant derivative takes sections of $\bbV(\l)$ to sections of (\cite{Branson:92}) 
\begin{equation}\label{cov}
T^*M\ot\bbV(\l)\congn\bbV(\m_1)\os\cdots\os\bbV(\m_N),
\end{equation} 
where
\begin{align*}
&\m_i=\l\pm e_a, \text{ for some }a\in\{1,\ldots,[n/2]\}\\
&\text{ or }\\
&\m_i=\l\, \text{ if $n$ is odd},\, \l_{[n/2]}\ne 0 .
\end{align*}
Here $e_a=(0,\ldots,0,1,0,\ldots,0)$ with ``1" in the $a$-th slot.

Let us consider the fundamental tensor-spinor (\cite{BH:02})
\begin{equation}\label{FTS}
\g:T^*S^n \to \endo(\S, \S)
\end{equation}
satisfying the Clifford relation
\begin{equation}\label{CR}
\g_i\g_j+\g_j\g_i=-2g_{ij}\id_\S,
\end{equation}
where $\S$ is the spinor bundle:
\begin{align*}
&\S=\bbV(1/2,\ldots,1/2),\quad n \text{ odd},\\
&\S=\S_{+}\os\S_{-}, \quad \S_\pm=\bbV(1/2,\ldots,1/2,\pm 1/2),\quad n \text { even}.
\end{align*}

The higher spin operators are projections of the above covariant derivative action ((\ref{cov})):
\begin{align*}
&\bbV\left(\frac{1}{2}+k,\frac{1}{2},\ldots,\frac{1}{2}\right) \to \bbV\left(\frac{1}{2}+k,\frac{1}{2},\ldots,\frac{1}{2}\right), \quad n \text{ odd}\\
&\bbV\left(\frac{1}{2}+k,\frac{1}{2},\ldots,\frac{1}{2},\pm\frac{1}{2}\right)\to \bbV\left(\frac{1}{2}+k,\frac{1}{2},\ldots,\frac{1}{2},\mp\frac{1}{2}\right), \quad n \text{ even}\\
&\quad \text{for }k=1,2,\ldots ,
\end{align*}
Taking a normalization, we can express these operators as
\begin{equation}\label{HS}
(\cR^{(k)}\f)_{a_1\cdots a_k}=\g^b\N_b\f_{a_1\cdots a_k}-\frac{2k}{n+2k-2}\g_{(a_1}\N^b\f_{a_2\cdots a_k)b} ,
\end{equation} 
where indices enclosed in parentheses are symmetrized and $\f$ is a spinor-valued trace-free symmetric $k$-tensor annihilated by the Clifford multiplication. i.e. 
\begin{equation}\label{CM}
\g^{a_1}\f_{a_1a_2\cdots a_k}=0.
\end{equation}

\textbf{Case I: $\bf n$ odd}\\ \\
$\cR^{(k)}$ is an intertwinor of order 1 with $k$-covariant tensor type content. So (\ref{Int-rel}) becomes
\begin{equation*}
\cR^{(k)}\left(\cL_X+\left(\frac{n}{2}-\frac{1}{2}-k\right)\w\right)=\left(\cL_X+\left(\frac{n}{2}+\frac{1}{2}-k\right)\w\right)\cR^{(k)}.
\end{equation*}
Now we specialize to the case of $S^n$ with its standard metric. 
Let $Y=\sin\r\ptl_\r$ be a conformal vector field and $\w=\cos\r$ its corresponding conformal factor (\cite{Hong:11}).

The following lemma compares the Lie derivative and covariant derivative on the spinor-valued $k$-tensor bundle on $S^n$.
\begin{lem}\label{lem1}
For any spinor-k-tensor $\F$, 
\begin{align*}
((\cL_Y-\N_Y)\Phi)_{a_1\cdots a_k}&=k\,\omega\Phi_{a_1\cdots a_k}\, .
\end{align*}
\end{lem}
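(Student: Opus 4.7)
The plan is to reduce the identity to a direct calculation of $\N Y$ on the sphere and then apply the standard decomposition of the Lie derivative on a spinor-valued covariant tensor into (i) a transport term $\N_Y\F$, (ii) a ``tensor-index'' contribution determined by $\N Y$, and (iii) a Kosmann spin lift that depends only on the antisymmetric part of $\N Y$.

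First I would compute $\N_a Y_b$ explicitly. In the warped-product coordinates $g=d\r^2+\sin^2\r\,\hat g$ on $S^n$ (with $\hat g$ the round metric on $S^{n-1}$), one has $Y=\sin\r\,\ptl_\r=\N(-\cos\r)$, so $\N Y$ is the Hessian of $-\cos\r$ and is therefore manifestly symmetric. A short calculation using $\G^\r_{ij}=-\sin\r\cos\r\,\hat g_{ij}$ and $\G^i_{\r j}=\cot\r\,\d^i_j$ then yields $\N_a Y_b=\cos\r\,g_{ab}=\w\,g_{ab}$. Equivalently, $Y$ is a closed (i.e., gradient) conformal vector field, so its covariant derivative is the pure trace $\w g$ with no antisymmetric part.

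Next I would invoke the familiar formula for the Lie derivative of a section $\F$ of $\S\ot(T^*S^n)^{\ot k}$:
\begin{equation*}
(\cL_Y\F)_{a_1\cdots a_k}=\N_Y\F_{a_1\cdots a_k}+\sum_{i=1}^{k}(\N_{a_i}Y^b)\,\F_{a_1\cdots b\cdots a_k}+\tfrac{1}{8}(\N_a Y_b-\N_b Y_a)\,\g^a\g^b\,\F_{a_1\cdots a_k},
\end{equation*}
the last summand being the Kosmann--Lichnerowicz spin lift acting on the spinor factor. Substituting $\N_a Y_b=\w g_{ab}$ from the first step kills the antisymmetric spin term outright and collapses the tensor sum to $\sum_{i=1}^{k}\w\,\d^b_{a_i}\F_{a_1\cdots b\cdots a_k}=k\w\,\F_{a_1\cdots a_k}$, which is the asserted identity $(\cL_Y-\N_Y)\F=k\w\F$.

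The only substantive step is the first one---verifying that $\N Y$ has no antisymmetric component. Once that is in hand, the cancellation of the spin contribution is automatic, and the $k\w$ coefficient reads off directly as the number of covariant tensor slots times the conformal factor.
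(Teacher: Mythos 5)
Your argument is correct and rests on the same two facts as the paper's proof: $\nabla Y_\flat=\omega g$ (equivalently, $Y$ is a \emph{closed} conformal field, so the antisymmetric part $\tfrac12 dY_\flat$ vanishes), and the Kosmann spin lift depends only on that antisymmetric part, so each of the $k$ covariant slots contributes $\omega$ and the spinor factor contributes nothing. The only differences are cosmetic --- you verify $\nabla_aY_b=\omega g_{ab}$ by an explicit Christoffel computation where the paper uses the conformal Killing equation plus exactness of $Y_\flat$, and your Kosmann term carries the opposite sign from the paper's $-\tfrac14\nabla_{[a}Y_{b]}\gamma^a\gamma^b$, which is immaterial since that term vanishes.
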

\begin{proof} Note that for a 1-form $\eta$ and a vector field $X$,
$$
\langle(\cL_Y-\N_Y)\eta,X\rangle=-\langle\eta,(\cL_Y-\N_Y)X\rangle,
$$
since $\cL_Y-\N_Y$ kills scalar functions.
But by the symmetry of the Riemannian connection,
$$
[Y,X]-\N_YX=-\N_XY.
$$
We conclude that
$$
(\cL_Y-\N_Y)\eta=\langle\eta,\N Y\rangle,
$$
where in the last expression, $\langle\cdot,\cdot\rangle$ is the pairing
of a 1-form with the contravariant part of a $\twostac{1}{1}$-tensor:
$$
((\cL_Y-\N_Y)\eta)_\l=\eta_\m\N_\l Y^\m.
$$
Since $Y$ is a conformal vector field, 
$$
(\N Y_\flat)_{\l\m}=(\N Y_\flat)_{(\l\m)}+(\N Y_\flat)_{[\l\m]}=(\omega g+\frac{1}{2}
dY_\flat)_{\l\m}=\omega g_{\l\m}, 
$$
where $(Y_\flat)_\a=g_{\a\b}Y^\b$, the contraction of $Y$ with the metric tensor $g$. Thus
\begin{equation*}
((\cL_Y-\N_Y)\eta)_\l=\omega\eta_\l\, .
\end{equation*}
Since $\cL_Y-\N_Y$ is a derivation, for any $k$-tensor $\f_{a_1\cdots a_k}$,
\begin{equation*}
((\cL_Y-\N_Y)\f)_{a_1\cdots a_k}=k\w\f_{a_1\cdots a_k}.
\end{equation*}
On the spinor bundle $\Sigma$, on the other hand, (\cite[eq(16)]{Kosmann:72})
$$
\cL_Y-\N_Y=-\tfrac14\N_{[a}Y_{b]}\g^a\g^b
=-\tfrac18(dY_\flat)_{ab}\g^a\g^b=0,
$$
where $\gamma$ is the fundamental tensor-spinor. Thus the lemma follows.
\end{proof}
The spectrum generating relation is given in the following lemma.
\begin{lem}\label{lem2}
On spinor-tensors of any type,
\begin{equation*}
[\N^*\N,\omega]=2\left(\nabla_Y+\dfrac{n}{2}\omega\right)\, ,
\end{equation*}
where [,] is the operator commutator.
\end{lem}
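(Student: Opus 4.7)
The plan is to expand the commutator directly using the product rule for the connection (which is the same whether $\phi$ is a tensor, a spinor, or a spinor-tensor, since $\w$ is a scalar function), and then specialize the two resulting terms to the conformal data $\w=\cos\r$, $Y=\sin\r\,\ptl_\r$ on the round sphere.

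First I would write, for a spinor-tensor $\f$ of any type,
\begin{equation*}
\N_a(\w\f)=(\N_a\w)\f+\w\N_a\f,\qquad
\N_b\N_a(\w\f)=(\N_b\N_a\w)\f+(\N_a\w)\N_b\f+(\N_b\w)\N_a\f+\w\N_b\N_a\f .
\end{equation*}
Contracting with $-g^{ab}$ and subtracting $\w\,\N^*\N\f$ gives the purely formal identity
\begin{equation*}
[\N^*\N,\w]\f=(\Lap\w)\f-2g^{ab}(\N_a\w)\N_b\f,
\end{equation*}
valid on any bundle associated to the frame bundle, because the connection acts as a derivation and $\w$ is scalar-valued.

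Next I would identify the two terms using the specific conformal data on $S^n$. For the first-order term, $\mathrm{grad}\,\w=g^{ab}(\ptl_b\cos\r)\ptl_a=-\sin\r\,\ptl_\r=-Y$, so
\begin{equation*}
-2g^{ab}(\N_a\w)\N_b\f=2\N_Y\f .
\end{equation*}
For the zero-order term, $\w=\cos\r$ is (up to normalization) the restriction of a linear coordinate on $\bbR^{n+1}$ to $S^n$, hence the first nontrivial spherical harmonic, and with the sign convention $\Lap=-g^{ab}\N_a\N_b$ used in the paper it satisfies $\Lap\w=n\w$. Substituting these two facts yields
\begin{equation*}
[\N^*\N,\w]\f=n\w\f+2\N_Y\f=2\Bigl(\N_Y+\frac{n}{2}\w\Bigr)\f,
\end{equation*}
which is the claim.

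There is no real obstacle: the first step is a Leibniz-rule bookkeeping that works uniformly for every tensor/spinor type, and the second step uses only two very standard facts about $S^n$ (the eigenvalue of $\cos\r$ under $\Lap$ and the relation $\mathrm{grad}(\cos\r)=-Y$). The only thing to be careful about is the sign convention on $\Lap$, so that the $n\w$ coming from $\Lap\w$ has the correct sign to combine with the $\N_Y$ term into $2(\N_Y+\tfrac{n}{2}\w)$.
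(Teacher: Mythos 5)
Your proof is correct and is essentially the paper's own argument: expand the commutator by the Leibniz rule to get $(\Lap\w)\f-2(d\w)^\a\N_\a\f$, then use $\Lap\cos\r=n\cos\r$ and $\operatorname{grad}(\cos\r)=-Y$ on $S^n$. The signs all check out with the convention $\Lap=-g^{ab}\N_a\N_b$, so there is nothing to add.
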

\begin{proof}
If $\varphi$ is any smooth section of a tensor-spinor bundle, then
\begin{equation*}
[\N^*\N,\omega]\varphi=(\Lap \omega)\varphi-2(d\omega)^\a\nabla_\a\varphi
=(n\omega+2Y^\a\N_\a)\varphi
=(n\omega+2\N_Y)\, \varphi.
\end{equation*}
\end{proof}
Thus the intertwining relation (\ref{Int-rel}) on spinor-$k$-tensors becomes
\begin{equation}\label{rel-2}
\cR^{(k)}\left(\frac{1}{2}[\N^*\N,\omega]-\frac{1}{2}\omega\right)=
\left(\frac{1}{2}[\N^*\N,\omega]+\frac{1}{2}\omega\right)\cR^{(k)}.
\end{equation}
Now we look at the $\spin(n+1)$ types in (\ref{decomp}) occurring over $\bbV(1/2+k,1/2,\ldots,1/2)$. 
Define
$$
\cV_\e(j,q):=\bbV\Big(\underbrace{\frac{1}{2}+k+j,\frac{1}{2}+q,\frac{1}{2},\ldots,\frac{1}{2},\frac{\e}{2}}_{\left[\frac{n+1}{2}\right]}\Big),\quad \e=\pm 1, \; q=0,1,\ldots,k,\; j=0,1,\ldots .
$$
For $\f\in\cV_\e(j,q)$, we have (\cite{BOO:96}) 
$$
\w\f\in\cV_\e(j+1,q)\os\cV_\e(j-1,q)\os\cV_\e(j,q+1)\os\cV_\e(j,q-1)\os\cV_{-\e}(j,q).
$$
Let $\a=\cV_\e(j,q)$ and $\b$ one of the summands in the above direct sum. We consider the compressed intertwining relation (\ref{rel-2}) between $\a$ and $\b$. 
\begin{equation*}
\m_\b\cdot\frac{1}{2}\left(\N^*\N|_\b-\N^*\N|_\a-1\right)\cdot|_\b\w|_\a\f=\frac{1}{2}\left(\N^*\N|_\b-\N^*\N|_\a+1\right)\cdot\m_\a\cdot|_\b\w|_\a\f,
\end{equation*}
where $\m_\a$, $\m_\b$ are eigenvalues of $\cR^{(k)}$ on $\a$, $\b$ isotypic summands, $|_\b\w|_\a$ is the projection of $\w$ onto $\b$ summand, and $\N^*\N|_\a$, $\N^*\N|_\b$ are evaluations of $\N^*\N$ on $\a$, $\b$ summands, respectively. 

Canceling $|_\b\w|_\a$ from both sides and computing $\N^*\N$ (\cite{Branson:92}), we get $\m_\b/\m_\a$ transition quantities. 

With respect to the diagram
\begin{equation*}
\begin{array}{ccc}
\cV_1(j+1,q) & &\cV_1(j,q+1) \\
\ua&\nearrow& \\
\boxed{\cV_1(j,q)}&\rightarrow&V_{-1}(j,q)\\
\da&\searrow& \\
\cV_1(j-1,q)& &\cV_1(j,q-1) \\
\end{array}
\end{equation*}
we get
\begin{equation}\label{DG}
\begin{array}{ccc}
(J+1)/J& &(n+2q)/(n+2q-2)\\
\ua&\nearrow& \\
\bullet&\rightarrow&-1\\
\da&\searrow& \\
(J-1)/J& &(n+2q-4)/(n+2q-2)\\
\end{array}
\end{equation}
where $J=n/2+k+j$. Thus the eigenvalue of $\cR^{(k)}$ on $\cV_1(1/2+k,1/2+k)$ completely determines the spectra of $\cR^{(k)}$.

Let $\L^k$ and $\sym^k$ be the spaces of $k$-forms and symmetric $k$-tensors, respectively.
\begin{thm}\label{DB} For $\F\in (\S\ot\L^k)\cup(\S\ot\sym^k)$ with $\g^{a_1}\F_{a_1 a_2\cdots a_k}=0$,
\begin{equation*}
\dc^2\F_{a_1\cdots a_k}=\left(\N^*\N+\frac{n(n-1)}{4}+k\right)\F_{a_1\cdots a_k},
\end{equation*}
where $\dc=\g^a\N_a$.
\end{thm}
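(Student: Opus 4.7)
The plan is to establish the identity as a Weitzenb\"ock-type formula built from the Clifford relation and the Ricci identity, then specialized to the constant-curvature sphere.

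First, I would compute $\dc^2\F=\g^a\g^b\N_a\N_b\F$ and split $\g^a\g^b$ into its symmetric and antisymmetric parts. By \nn{CR}, the symmetric part is $-g^{ab}\id$, which contracts with $\N_a\N_b$ to produce $\N^*\N$. The antisymmetric part $\g^{ab}=\tfrac12[\g^a,\g^b]$ sees only the antisymmetric part of $\N_a\N_b$, so
$$
\dc^2\F=\N^*\N\F+\tfrac12\g^{ab}[\N_a,\N_b]\F.
$$
The second term is now a purely curvature expression.

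Next, I would expand $[\N_a,\N_b]\F_{a_1\cdots a_k}$ as a sum of curvature actions: one term from the spin-connection curvature acting on the $\S$-factor, and one Riemann-action term for each of the $k$ covariant tensor slots. The pure-spinor term, paired with $\tfrac12\g^{ab}$, reproduces the classical Lichnerowicz calculation and gives scalar curvature over four, i.e.\ $n(n-1)/4$ on $S^n$. For the tensor-slot contributions I would use the constant-sectional-curvature expression $R_{abcd}=g_{ac}g_{bd}-g_{ad}g_{bc}$. After substituting at slot $i$ and contracting with $\g^{ab}$, and then repeatedly applying $\{\g^a,\g_{a_i}\}=-2\delta^a_{a_i}$ to reorder Clifford generators, the expression collapses via two inputs: the hypothesis \nn{CM}, together with the full symmetry or antisymmetry of $\F$ in the $\S\ot\sym^k$ (respectively $\S\ot\L^k$) case, forces $\g^a\F_{a_1\cdots a\cdots a_k}=0$ at every slot; and the trivial replacement $\delta^a_{a_i}\F_{a_1\cdots a\cdots a_k}=\F_{a_1\cdots a_k}$ yields the scalar residue. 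The contribution works out to $+\F$ per slot, totaling $+k\F$ over all slots.

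Adding the Lichnerowicz constant $n(n-1)/4$ and the slot contribution $k$ yields the stated identity. The main obstacle is the slot-by-slot bookkeeping: one has to track the order of the Clifford factors carefully and invoke the Clifford-annihilation hypothesis at exactly the step where the unwanted contraction terms $\g^b\F_{\cdots b\cdots}$ appear, so that after cancellation only the clean scalar residue $+\F$ per slot survives and combines with the Lichnerowicz constant.
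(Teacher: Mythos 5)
Your proposal is correct and follows essentially the same route as the paper: both reduce $\dc^2$ to $\N^*\N+\tfrac12\g^a\g^b[\N_a,\N_b]$ (you via the symmetric/antisymmetric split of $\g^a\g^b$, the paper via anticommuting and solving for $\dc^2$, which is the same computation), then evaluate the curvature term on $S^n$ slot by slot, using the Lichnerowicz constant $n(n-1)/4$ for the spinor factor and the Clifford-annihilation condition \nn{CM} (propagated to every slot by symmetry or antisymmetry) to extract $+1$ per tensor slot. No gaps.
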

\begin{proof}
We write $\dc^2\F=\g^a\g^b(\N_b\N_a+\cR_{ab})\F=(-\dc^2+2\N^*\N+\g^a\g^b\cR_{ab})\F$ using the Clifford relation (\ref{CR}), where $\cR_{ab}=[\N_a,\N_b]$ is the spin curvature (\cite{BH:02}). So $\dc^2\F=(\N^*\N+1/2\g^a\g^b\cR_{ab})\F$. And
$$
\g^a\g^b\cR_{ab}\F_{a_1\cdots a_k}=\g^a\g^b(W_{ab}\F_{a_1\cdots a_k}-R^\n_{\phantom{x}a_1ab}\F_{\n a_2\cdots a_k}-\cdots -R^\n_{\phantom{x}a_kab}\F_{a_1\cdots a_{k-1}\n}),
$$
where $W_{ab}=-1/4R_{\k\n ab}\g^\k\g^\n$ is the action on spinors and $R_{\k\n\l\m}$ is the Riemann curvature tensor. On $S^n$, since $R_{\k\n\l\m}=g_{\k\l}g_{\n\m}-g_{\k\m}g_{\n\l}$,
\begin{align*}
&\g^a\g^bW_{ab}=\frac{n(n-1)}{2}\\
&\g^a\g^bR^\n_{\phantom{x}a_iab}\F_{a_1\cdots a_{i-1}\n a_{i+1}\cdots a_k}=(\g^\n\g_{a_i}-\g_{a_i}\g^\n)\F_{a_1\cdots a_{i-1}\n a_{i+1}\cdots a_k}\\
&\quad =(-2\g_{a_i}\g^\n-2\d^\n_{a_i})\F_{a_1\cdots a_{i-1}\n a_{i+1}\cdots a_k}\\
&\quad =-2\F_{a_1\cdots a_k}
\end{align*}
and the theorem follows.
\end{proof}
\begin{rmk} When $k=0$, this is the classical Lichnerowicz formula on the spinor bundle over $S^n$. And, in general, $\g^a\g^bW_{ab}=\text{Scal}/2$, where Scal is the scalar curvature. 
\end{rmk}
Notice that $\cR^{(k)}$ reduces to $\dc$ on $\cV_1(1/2+k,1/2+k)$, since it is of divergence type. That is, if $\f\in\cV_1(1/2+k,1/2+k)$, then $\f=\N^*\y$ for some section $\y$ over $\bbV(1/2+k,3/2,1/2,\ldots, 1/2)$.

We can now describe the spectra of the higher spin operators.
\begin{thm} The operator $\cR^{(k)}$ acts as a constant
\begin{equation*}
\e\cdot\frac{n+2q-2}{n+2k-2}\cdot \left(\frac{n}{2}+k+j\right) \quad \text{on }\; \cV_\e\left(\frac{1}{2}+k+j,\frac{1}{2}+q\right),\; \e=\pm 1,\; q=0,1,\ldots ,k,\; j=0,1,\ldots .
\end{equation*}
\end{thm}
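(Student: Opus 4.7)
The plan is a two-step strategy: first, propagate the eigenvalue of $\cR^{(k)}$ across all $\spN$-isotypic summands by telescoping the transition ratios of diagram \nn{DG} starting from a single ``base'' summand, then pin down the overall constant by a direct calculation on that base.

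For Step~1, I take as base the summand $\cV_1(0,k)$ (in the $(j,q)$-indexing of the excerpt), i.e.\ the highest-weight bundle $\bbV(1/2+k,1/2+k,1/2,\ldots,1/2)$, which corresponds to $j=0$, $q=k$, $\e=+1$ in the theorem's statement. Walking from this base to an arbitrary $\cV_\e(j,q)$ along the lattice in \nn{DG}: each upward $j$-step from $j'$ to $j'+1$ contributes $(J+1)/J$ with $J=n/2+k+j'$, telescoping to $(n/2+k+j)/(n/2+k)$; each downward $q$-step from $q'$ to $q'-1$ contributes $(n+2q'-4)/(n+2q'-2)$, telescoping to $(n+2q-2)/(n+2k-2)$; and the horizontal arrow into the $\e=-1$ sector contributes the sign $\e$. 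Hence, writing $\mu_0$ for the eigenvalue of $\cR^{(k)}$ on the base,
\[
\mu\bigl(\cV_\e(j,q)\bigr) \;=\; \e\,\mu_0\cdot\frac{n/2+k+j}{n/2+k}\cdot\frac{n+2q-2}{n+2k-2}.
\]

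For Step~2, I use the remark preceding the theorem: on $\cV_1(0,k)$ the Clifford-trace correction in \nn{HS} vanishes, so $\cR^{(k)}$ restricts to $\dc$ there. Theorem~\ref{DB} then gives $\dc^2 = \N^*\N + n(n-1)/4 + k$. The eigenvalue of $\N^*\N$ on a $\spN$-isotypic summand of sections of $\bbV(\lambda)$ is a standard Casimir difference (\cite{Branson:92}); for the pair $\alpha=(1/2+k,1/2+k,1/2,\ldots,1/2)$ against $\lambda=(1/2+k,1/2,\ldots,1/2)$ the arithmetic yields $\N^*\N = k^2+k(n-1)+n/4$, so $\dc^2 = (n/2+k)^2$ and therefore $\mu_0 = n/2+k$ (positive by the convention $\e=+1$). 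Substituting into the displayed formula produces the stated spectrum.

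The main obstacle is Step~2 rather than Step~1: the telescoping is routine once \nn{DG} is granted, but fixing $\mu_0$ requires two pieces of care. First, the reduction $\cR^{(k)}|_{\cV_1(0,k)}=\dc$ must be verified; it rests on writing sections of this summand as $\f = \N^*\y$ for $\y\in\bbV(1/2+k,3/2,1/2,\ldots,1/2)$, which forces the correction term in \nn{HS} to drop out via \nn{CM}. Second, the Casimir bookkeeping for $\N^*\N$ must be tracked carefully, and the positive square root for $\dc$ must be selected in accordance with the sign convention fixed in the diagram \nn{DG}.
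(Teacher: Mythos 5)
Your proposal is correct and follows essentially the same route as the paper: the paper likewise determines the spectrum by combining the transition-quantity diagram \nn{DG} with the observation that $\cR^{(k)}$ reduces to $\dc$ on the divergence-type summand $\cV_1(1/2+k,1/2+k)$ and then applying Theorem \ref{DB} there. Your telescoping and the evaluation $\mu_0=n/2+k$ agree with the paper's (much terser) argument, so there is nothing to add.
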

\begin{proof} This is a direct consequence of Theorem \ref{DB} together with the diagram of transition quantities (\ref{DG}).
\end{proof}
\begin{rmk} When $k=0,1$, we get eigenvalues of the Dirac and the Rarita-Schwinger operators, respectively (\cite{Branson:99}).
\end{rmk} 
\textbf{Case II: $\bf n$ even}\\ \\
In this case $\cR^{(k)}$ changes chirality of spinors. That is,
$$
\cR^{(k)}:\bbV\left(\frac{1}{2}+k,\frac{1}{2},\ldots,\frac{1}{2},\frac{\e}{2}\right)\to\bbV\left(\frac{1}{2}+k,\frac{1}{2},\ldots,\frac{1}{2},-\frac{\e}{2}\right),\quad \e=\pm 1.
$$
Consider (\cite{Branson:97}) the commuting diagram.
\begin{equation*}
\begin{array}{ccc}
\bbV(1/2+k,1/2,\ldots,1/2,-\e/2)&\stackrel{I_2}{\longrightarrow}&\bbV(1/2+k,1/2,\ldots,1/2,-\e/2)\\
\ua G&&\da G^*\\
\bbV(1/2+k,1/2,\ldots,1/2,\e/2)&\stackrel{I_1}{\longrightarrow}&\bbV(1/2+k,1/2,\ldots,1/2,\e/2)
\end{array}
\end{equation*}
Here $G$ is the generalized gradient (projection of the covariant derivative action, ((\ref{cov})), $G^*$ is the adjoint of $G$, and $I_1$, $I_2$ are intertwinors :
\begin{align*}
&I_1\left(\tilde{\cL}_X+\left(\frac{n-2}{2}\right)\omega\right)=
\left(\tilde{\cL}_X+\left(\frac{n+2}{2}\right)\omega\right)I_1 \text{ and}\\
&I_2\left(\tilde{\cL}_X+\left(\frac{n+2}{2}\right)\omega\right)=
\left(\tilde{\cL}_X+\left(\frac{n-2}{2}\right)\omega\right)I_2 .
\end{align*}
Since $(\cR^{(k)})^2$ is a constant multiple of $G^*G$, we just need the ratios of eigenvalues of $I_1$ to $I_2$. But $I_2$ is a constant multiple of $1/I_1$. Thus $(\cR^{(k)})^2=c\cdot (I_1)^2$ for some constant $c$.
 
The following lemma determines the constant $c$.
\begin{lem}\label{HS2} $(\cR^{(k)})^2=\dc^2$ on $\cV(1/2+k,1/2+k,1/2,\ldots,1/2)$.
\begin{proof}
Let $\f\in \cV(1/2+k,1/2+k,1/2,\ldots,1/2)$. So $\f=\N^*\y$ for some $\y$ over $\bbV(1/2+k,3/2,1/2,\ldots,\e/2)$.
Since $\cR^{(k)}\f=\dc\f$ on $\cV(1/2+k,1/2+k,1/2,\ldots,1/2)$, 
$$
(\cR^{(k)})^2\f_{a_1\cdots a_k}=\dc^2\f_{a_1\cdots a_k}-\frac{2k}{n+2k-2}\g_{(a_1}\N^b\dc\f_{a_2\cdots a_k)b} .
$$
But
\begin{align*}
&\N^b\dc\f_{a_2\cdots a_kb}=\g^c(\N_c\N_b+\cR_{bc})\f^b_{\phantom{x}a_2\cdots a_k}=\g^c\cR_{bc}\f^b_{\phantom{x}a_2\cdots a_k}\\
&\quad =\g^c(W_{bc}\f^b_{\phantom{x}a_2\cdots a_k}-R^{\n b}{}{}_{bc}\f_{\n a_2\cdots a_k}-R^\n{}_{a_2bc}\f^b{}_{\n a_3\cdots a_k}-\cdots -R^\n{}_{a_kbc}\f^b{}_{a_2\cdots a_{k-1}\n})\\
&\quad =\text{constant}\cdot\g^c\f_{ca_2\cdots a_k}\\
&\quad =0 ,
\end{align*}
where $\cR_{bc}$ is the spin curvature. Hence the claim follows.
\end{proof}
\end{lem}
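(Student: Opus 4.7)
The plan is to first reduce one factor of $\cR^{(k)}$ to the Dirac-type operator $\dc=\g^c\N_c$. Any $\f\in\cV(1/2+k,1/2+k,1/2,\ldots,1/2)$ is of \emph{divergence type}, i.e.\ $\f=\N^*\y$ for some $\y$ over a higher-weight bundle of the form $\bbV(1/2+k,3/2,1/2,\ldots)$. Consequently the divergence-correction term in the formula (\ref{HS}) defining $\cR^{(k)}$ vanishes on $\f$ and $\cR^{(k)}\f=\dc\f$; this is the even-dimensional analogue of the observation used in Case~I just before Theorem \ref{DB}.

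Applying the full formula (\ref{HS}) to $\dc\f$, which now lies in the opposite-chirality bundle, gives
\[
(\cR^{(k)})^2\f_{a_1\cdots a_k}
=\dc^2\f_{a_1\cdots a_k}-\frac{2k}{n+2k-2}\,\g_{(a_1}\N^b(\dc\f)_{a_2\cdots a_k)b},
\]
so the lemma reduces to showing $\N^b(\dc\f)_{a_2\cdots a_k b}=0$.

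To establish this, I would commute $\N^b$ past $\g^c\N_c$, producing $\g^c\N_c(\N^b\f_{a_2\cdots a_k b})+\g^c[\N^b,\N_c]\f_{a_2\cdots a_k b}$. The first term is $\dc$ applied to the divergence of $\f$, which vanishes on our summand: the divergence of a section of $\cV(1/2+k,1/2+k,\ldots)$ either vanishes identically or lies in a $\spN$-isotypic component on which $\dc$ acts trivially, as can be read off from the branching rule (\ref{decomp}). For the commutator term, I would expand $[\N^b,\N_c]$ on the spinor-valued $k$-tensor $\f^b{}_{a_2\cdots a_k}$ using the spin curvature $W_{bc}=-\tfrac14 R_{\k\n bc}\g^\k\g^\n$ and the Riemann action on each covariant index. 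On the round sphere $R_{\k\n\l\m}=g_{\k\l}g_{\n\m}-g_{\k\m}g_{\n\l}$, so after contracting with $\g^c$ and using the Clifford relation (\ref{CR}), every surviving term collapses to a constant multiple of $\g^c\f_{c a_2\cdots a_k}$, which vanishes by the Clifford annihilation hypothesis (\ref{CM}).

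The main obstacle is the bookkeeping in the curvature computation: there are $k+1$ distinct Riemann contractions (the spin-curvature piece through $W_{bc}$, one Ricci-like trace on the raised slot $b$, and one Riemann contraction per covariant index $a_2,\ldots,a_k$), and one must check term by term that after contracting with $\g^c$ each reduces to the Clifford-annihilated combination $\g^c\f_{c a_2\cdots a_k}$ with no residual piece. Once this is done, both contributions to $\N^b(\dc\f)_{a_2\cdots a_k b}$ vanish and $(\cR^{(k)})^2=\dc^2$ on the summand follows.
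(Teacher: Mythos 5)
Your proposal follows the paper's own argument essentially step for step: reduce $\cR^{(k)}$ to $\dc$ on the divergence-type summand, apply the defining formula (\ref{HS}) to $\dc\f$, and kill the correction term by commuting $\N^b$ past $\dc$ so that only a spin-curvature contraction survives, which on the round sphere collapses via the Clifford relation to multiples of $\g^c\f_{ca_2\cdots a_k}=0$ (together with trace-freeness and symmetry of $\f$). This is exactly the computation in the paper, so the approach is correct and not materially different.
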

We define
$$
\cV(j,q):=\bbV\Big(\underbrace{\frac{1}{2}+k+j,\frac{1}{2}+q,\frac{1}{2},\ldots,\frac{1}{2}}_{\left[\frac{n+1}{2}\right]}\Big),\quad q=0,1,\ldots,k,\; j=0,1,\ldots .
$$
For $\f\in\cV(j,q)$, we have (\cite{BOO:96}) 
$$
\w\f\in\cV(j+1,q)\os\cV(j-1,q)\os\cV(j,q+1)\os\cV(j,q-1)\os\cV(j,q).
$$
With respect to the diagram
\begin{equation*}
\begin{array}{ccc}
\cV(j+1,q) & &\cV(j,q+1) \\
\ua&\nearrow& \\
\boxed{\cV(j,q)}&&\\
\da&\searrow& \\
\cV(j-1,q)& &\cV(j,q-1) \\
\end{array}
\end{equation*}
we get
\begin{equation*}
\begin{array}{ccc}
(J+1)/J& &(n+2q)/(n+2q-2)\\
\ua&\nearrow& \\
\bullet&&\\
\da&\searrow& \\
(J-1)/J& &(n+2q-4)/(n+2q-2)\\
\end{array}
\end{equation*}
where $J=n/2+k+j$.

Thus we have
\begin{thm}\label{sq} The operator $(\cR^{(k)})^2$ acts as a constant
\begin{equation*}
\left[\frac{n+2q-2}{n+2k-2}\cdot \left(\frac{n}{2}+k+j\right)\right]^2 \quad \text{on }\; \cV\left(\frac{1}{2}+k+j,\frac{1}{2}+q\right),\; q=0,1,\ldots ,k,\; j=0,1,\ldots .
\end{equation*}
\end{thm}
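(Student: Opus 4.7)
The plan is to mimic the odd-dimensional argument. Because $(\cR^{(k)})^2=c\cdot(I_1)^2$ is a $K$-intertwinor and therefore acts by a scalar on each $\spN$ isotypic summand $\cV(j,q)$, it suffices to (i) determine the ratios of those scalars between adjacent $\cV(j,q)$'s and (ii) pin down a single anchor eigenvalue; the two together determine the full spectrum.

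For step (i), I would compress the intertwining relation for $I_1$ between a pair of adjacent $\spN$-types $\a,\b\subset \bigoplus_{j,q}\cV(j,q)$, cancel the projection $|_\b\w|_\a$ from both sides in the manner of (\ref{rel-2}), and substitute the standard Casimir values of $\N^*\N$ on $\bbV(1/2+k+j,1/2+q,1/2,\ldots,1/2)$ from \cite{Branson:92}. This should reproduce the transition ratios $(J\pm 1)/J$ and $(n+2q,\,n+2q-4)/(n+2q-2)$ in the displayed diagram, where $J=n/2+k+j$. Squaring these ratios gives the corresponding transitions for $(\cR^{(k)})^2$, so the eigenvalue on $\cV(j,q)$ must take the form $(\text{anchor})\cdot\bigl[\tfrac{n+2q-2}{n+2k-2}\cdot\tfrac{n/2+k+j}{n/2+k}\bigr]^2$.

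For step (ii), I would anchor at the top summand $\cV(0,k)=\bbV(1/2+k,1/2+k,1/2,\ldots,1/2)$. There $\cR^{(k)}$ is of divergence type, so $\f=\N^*\y$ for some $\y$ over $\bbV(1/2+k,3/2,1/2,\ldots,1/2)$; this is precisely the hypothesis of Lemma \ref{HS2}, giving $(\cR^{(k)})^2=\dc^2$ on this summand. Theorem \ref{DB} then reduces the right-hand side to $\N^*\N+n(n-1)/4+k$, and the Casimir evaluation of $\N^*\N$ on $\cV(0,k)$ should collapse to $(n/2+k)^2-n(n-1)/4-k$, yielding anchor value $(n/2+k)^2$. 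Multiplying by the transition ratios from step (i) produces the claimed formula.

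The main obstacle is arithmetic bookkeeping at the anchor: one must verify that the Casimir difference for $\N^*\N$ on $\bbV(1/2+k,1/2+k,1/2,\ldots,1/2)$ simplifies cleanly to $(n/2+k)^2-n(n-1)/4-k$ so that the square-anchor exactly matches the $(j,q)=(0,k)$ specialization of the claimed formula. Chirality labels $\pm\e/2$ drop out because $(\cR^{(k)})^2$ preserves chirality, and the squaring in the formula absorbs any ambiguity of sign that would have arisen from the order-1 factor $I_1$.
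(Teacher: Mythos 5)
Your proposal is correct and follows essentially the same route as the paper: the paper also reduces $(\cR^{(k)})^2$ to $c\cdot(I_1)^2$, obtains the transition ratios $(J\pm1)/J$ and $(n+2q)/(n+2q-2)$, $(n+2q-4)/(n+2q-2)$ from the compressed intertwining relation for the order-one intertwinor, and anchors at $\cV(1/2+k,1/2+k,1/2,\ldots,1/2)$ via Lemma \ref{HS2} and Theorem \ref{DB} to get the value $(n/2+k)^2$. Nothing essential is missing.
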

\section{Spin operators over spinor-forms}
In this section, we consider the conformally invariant spin operators on $\bbV(\underbrace{3/2,\ldots,3/2}_{k},1/2,\ldots,\e/2)$ where $\e=1$ for $n$ odd, $\e=\pm 1$ for $n$ even, and $0\le k < n/2$. These operators satisfy the intertwining relation (\ref{Int-rel}). By Lemma \ref{lem1} and \ref{lem2}, we have, for an order $2r$ intertwinor $A=A_{2r}$, 
\begin{equation*}
A\left(\frac{1}{2}[\N^*\N,\omega]-r\omega\right)=
\left(\frac{1}{2}[\N^*\N,\omega]+r\omega\right)A.
\end{equation*}

\textbf{Case I: $\bf n$ odd and $\bf k=0$}\\
\\
Let $\cV_\e(j)=\cV(1/2+j,1/2,\ldots,1/2,\e/2)$ for $\e=\pm 1$.
With respect to the following diagram
\begin{equation*}
\begin{array}{ccc}
\cV_\e(j+1)&&\\
\ua&& \\
\boxed{\cV_\e(j)}&\rightarrow&V_{-\e}(j)\\
\da&& \\
\cV_\e(j-1)&&\\
\end{array}
\end{equation*}
we get
\begin{equation*}
\begin{array}{ccc}
(J+1/2+r)/(J+1/2-r)&&\\
\ua&& \\
\bullet&\rightarrow&-1\\
\da&& \\
(-J+1/2+r)/(-J+1/2-r)&&\\
\end{array}
\end{equation*}
where $J=n/2+j$. 
Choosing a normalization $\m_{{}_{\cV_\e(0)}}=\e$, we have
\begin{thm} The unique spectral function $Z_\e(r,j)$ on $\cV_\e(j)$ is up to normalization
\begin{equation*}
Z_\e(r,j)=\e\cdot\frac{\G(J+\frac{1}{2}+r)\G(\frac{n}{2}+\frac{1}{2}-r)}{\G(J+\frac{1}{2}-r)\G(\frac{n}{2}+\frac{1}{2}+r)},\quad \e=\pm 1, \quad j=0,1,2,\ldots. 
\end{equation*}
\end{thm}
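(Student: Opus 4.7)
The plan is to read the eigenvalue on each $\spn$-summand $\cV_\e(j)$ directly from the transition diagram preceding the theorem, by walking up the $j$-ladder starting at $\cV_\e(0)$ and multiplying the recorded ratios, then recognising the resulting telescoping product as a ratio of Gamma functions.

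First I would verify that the diagram is internally consistent, so that the spectral function is single-valued. With $J=n/2+j$, the upward ratio attached to $\cV_\e(j-1)\to\cV_\e(j)$ is $(J-\tf+r)/(J-\tf-r)$, while the downward ratio attached to $\cV_\e(j)\to\cV_\e(j-1)$ is $(-J+\tf+r)/(-J+\tf-r)=(J-\tf-r)/(J-\tf+r)$; these are reciprocals, so the $j$-ladder is consistent and it suffices to iterate only the upward ratios. The chirality-flip arrow carrying transition quantity $-1$ is then used once, to fix the base values $\m_{\cV_1(0)}=1$ and $\m_{\cV_{-1}(0)}=-1$, i.e.\ the normalization $\m_{\cV_\e(0)}=\e$.

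Second, telescoping the upward ratio from level $0$ to level $j$ gives
\[
Z_\e(r,j)\;=\;\e\prod_{j'=0}^{j-1}\frac{(n/2+j')+\tf+r}{(n/2+j')+\tf-r}.
\]
Applying the Pochhammer identity $\prod_{j'=0}^{j-1}(a+j')=\G(a+j)/\G(a)$ to the numerator with $a=n/2+\tf+r$ and to the denominator with $a=n/2+\tf-r$ collapses this to
\[
\e\cdot\frac{\G(J+\tf+r)\,\G(n/2+\tf-r)}{\G(J+\tf-r)\,\G(n/2+\tf+r)},
\]
which is exactly the claimed expression. Uniqueness up to normalization is automatic, because the transition diagram connects every $\cV_\e(j)$ back to the base point through a chain of edges whose ratios are already known.

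The only step I expect to deserve real care is the sign-bookkeeping at the chirality-flip edge: one must confirm that the transition quantity $-1$ is what produces the overall prefactor $\e$, rather than some more subtle sign pattern accumulated along the $j$-ladder. This is immediate from the fact that the Gamma-function ratio evaluates to $1$ at $j=0$ and involves no sign changes in $j$. Beyond this, the argument is pure bookkeeping on the ladder and presents no serious obstacle; the substantive content of the theorem is already packaged into the transition quantities derived before its statement.
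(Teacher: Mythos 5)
Your proposal is correct and follows the same route the paper intends: the paper gives no explicit proof of this theorem, simply presenting the transition diagram and the normalization $\m_{\cV_\e(0)}=\e$, and your telescoping of the upward ratios into the Gamma-function quotient, together with the $-1$ chirality-flip edge producing the prefactor $\e$, is exactly the bookkeeping the paper leaves implicit. The consistency check (up and down ratios being reciprocal) and the connectedness argument for uniqueness are sound.
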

If $2r=1$, $Z_\e(1/2,j)=\e\cdot J\cdot \frac{2}{n}=\frac{2}{n}\cdot\dc$ on $\cV_\e(j)$ is a constant multiple of the Dirac operator. As a consequence of the spectral function in the theorem, we get
\begin{cor} (\cite{BO:06}) The differential operator $D_{2l+1}:\S\to\S$ defined by
\begin{equation*}
D_{2l+1}:=\dc\cdot\prod_{p=1}^l (\dc^2-p^2)
\end{equation*}
is conformally invariant of order $2l+1$.
\end{cor}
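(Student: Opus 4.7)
The plan is to identify $D_{2l+1}$, up to an overall scalar, with the intertwinor $A_{2l+1}$ of order $2l+1$ whose spectrum on each $\cV_\e(j)$ is the function $Z_\e(l+1/2,j)$ from the preceding theorem. Since a nonzero scalar multiple of a conformally invariant operator is again conformally invariant, this identification will suffice.

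First I would compute the eigenvalue of $D_{2l+1}$ on each isotypic summand $\cV_\e(j)$. Using the $r=1/2$ case of the theorem (i.e.\ the Dirac identification noted there), $\dc$ acts on $\cV_\e(j)$ as the scalar $\e J$ with $J:=n/2+j$. Hence $\dc^2$ acts as $J^2$, and $D_{2l+1}$ acts as
$$
\e\,J\prod_{p=1}^{l}(J^2-p^2)\;=\;\e\,(J-l)(J-l+1)\cdots(J+l)\;=\;\e\cdot\frac{\G(J+l+1)}{\G(J-l)}.
$$
Next I would specialize the spectral function at $r=l+1/2$, obtaining
$$
Z_\e(l+1/2,j)\;=\;\e\cdot\frac{\G(J+l+1)\,\G(n/2-l)}{\G(J-l)\,\G(n/2+l+1)}.
$$
The two expressions differ by the factor $\G(n/2+l+1)/\G(n/2-l)$, which depends only on $n$ and $l$, not on $\e$ or $j$.

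Finally, both $D_{2l+1}$ and $A_{2l+1}$ are $K$-equivariant endomorphisms of $\S$ — the former because $\dc$ is a natural differential operator, the latter by construction — so by Schur's lemma each acts on $\cV_\e(j)$ as multiplication by its eigenvalue. Matching eigenvalues across all $\e,j$ with the single constant of proportionality above yields
$$
D_{2l+1}\;=\;\frac{\G(n/2+l+1)}{\G(n/2-l)}\cdot A_{2l+1},
$$
whence $D_{2l+1}$ is conformally invariant of order $2l+1$. I do not foresee any substantive obstacle: the only nontrivial routine input is the telescoping identity $\G(J+l+1)/\G(J-l)=J\prod_{p=1}^{l}(J^2-p^2)$, and the conceptual content is the uniqueness-up-to-normalization of the spectral function supplied by the theorem, which is what makes a polynomial-in-$\dc$ expression with the right eigenvalues automatically conformally invariant.
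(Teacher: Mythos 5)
Your argument is correct and is essentially the paper's own proof: compute that $D_{2l+1}$ acts as $\e\,J\prod_{p=1}^l(J^2-p^2)$ on $\cV_\e(j)$ and observe this is a constant multiple of $Z_\e(l+1/2,j)$, invoking the uniqueness of the spectral function. You have simply made explicit the telescoping Gamma-function identity and the proportionality constant that the paper leaves implicit.
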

\begin{proof} $D_{2l+1}$ acts as $\e\cdot J\cdot\prod_{p=1}^l (J^2-p^2)$ on $V_\e(j)$ so it is a constant multiple of $Z_\e(l+1/2,j)$.
\end{proof}
 
\textbf{Case II: $\bf n$ odd and $\bf k\ge 1$}\\
\\
Let, for $\e=\pm 1$,
\begin{equation*}
\cV_\e(j,q)=\begin{cases}\cV_\e(3/2+j,3/2,\ldots,3/2,\underbrace{1/2+q}_{(k+1)^\text{st}},1/2,\ldots,1/2,\e/2),\quad k<(n-1)/2\\
\cV_\e(3/2+j,3/2,\ldots,3/2,\e(1/2+q)),\quad k=(n-1)/2.
\end{cases}
\end{equation*}
With respect to the diagram
\begin{equation*}
\begin{array}{ccccc}
&&\cV_\e(j+1)&&\\
&&\ua&& \\
\cV_\e(j,0)&\leftarrow&\boxed{\cV_\e(j,1)}&\rightarrow&V_{-\e}(j,1)\\
&&\da&& \\
&&\cV_\e(j-1)&&\\
\end{array}
\end{equation*}
we get
\begin{equation*}
\begin{array}{ccccc}
&&(J+\frac{1}{2}+r)/(J+\frac{1}{2}-r)&&\\
&&\ua&& \\
(n-2k+1-2r)/(n-2k+1+2r)&\leftarrow&\bullet&\rightarrow&-1\\
&&\da&& \\
&&(-J+\frac{1}{2}+r)/(-J+\frac{1}{2}-r)&&\\
\end{array}
\end{equation*}
where $J=n/2+1+j$.
Choosing a normalization $\m_{{}_{\cV_\e(0,1)}}=\e$, we have
\begin{thm}\label{form} The unique spectral function $Z_\e(r,j,q)$ on $\cV_\e(j,q)$ is up to normalization
\begin{align*}
&Z_\e(r,j,q)=\e\cdot\frac{n-2k+1+2(2q-1)r}{n-2k+1+2r}\cdot\frac{\G(J+\frac{1}{2}+r)\G(\frac{n}{2}+\frac{3}{2}-r)}{\G(J+\frac{1}{2}-r)\G(\frac{n}{2}+\frac{3}{2}+r)},\\
&\quad \e=\pm 1, \quad q=0,1, \quad j=0,1,2,\ldots. 
\end{align*}
\end{thm}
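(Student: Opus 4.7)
The plan is to run the spectrum generating method using the transition diagram displayed just before Theorem~\ref{form} and to anchor the eigenvalues at the normalization point $\cV_\e(0,1)$. Every edge of that diagram corresponds to a compressed form of the intertwining relation
\begin{equation*}
A\left(\tfrac12[\N^*\N,\w]-r\w\right)=\left(\tfrac12[\N^*\N,\w]+r\w\right)A,
\end{equation*}
which, after cancelling the projection $|_\b\w|_\a$ between two $\spN$ summands $\a,\b$, reads $\m_\b\big((c_\b-c_\a)-2r\big)=\m_\a\big((c_\b-c_\a)+2r\big)$, where $c_\a,c_\b$ are the $\N^*\N$ eigenvalues. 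These give precisely the transition quantities written on each arrow in the diagram.

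I would then observe that every $\cV_\e(j,q)$ with $j\ge 0$ and $q\in\{0,1\}$ is reached from $\cV_\e(0,1)$ by a single path: climb the up-ladder $j$ times from $(0,1)$ to $(j,1)$, then (only if $q=0$) take the one left-edge. Setting $\m_{\cV_\e(0,1)}=\e$ and writing $J_i=n/2+1+i$, the $j$-ladder product telescopes as
\begin{equation*}
\prod_{i=0}^{j-1}\frac{J_i+\tfrac12+r}{J_i+\tfrac12-r}
=\frac{\G(J+\tfrac12+r)\,\G(\tfrac{n}{2}+\tfrac32-r)}{\G(J+\tfrac12-r)\,\G(\tfrac{n}{2}+\tfrac32+r)},\qquad J=\tfrac{n}{2}+1+j,
\end{equation*}
which is exactly the Gamma-quotient in the claim. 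The $q$-prefactor equals $1$ at $q=1$ (no left-edge taken) and $(n-2k+1-2r)/(n-2k+1+2r)$ at $q=0$ (one left-edge); both cases are packaged uniformly as $(n-2k+1+2(2q-1)r)/(n-2k+1+2r)$.

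Uniqueness ``up to normalization'' is automatic once each edge ratio is pinned down by the compressed intertwining relation, but one does have to check path-consistency: the down-edge $(-J+\tfrac12+r)/(-J+\tfrac12-r)$ at $(j,1)\to(j-1,1)$ must be the reciprocal of the up-edge at $(j-1,1)\to(j,1)$, and substituting $J\mapsto J-1$ in the up-ratio confirms this in one line. The genuine obstacle, and the step I would verify most carefully, is the derivation of the left-edge ratio $(n-2k+1-2r)/(n-2k+1+2r)$: this rests on the Casimir-type evaluation of $\N^*\N$ on the two $\spN$ types $\cV_\e(j,0)$ and $\cV_\e(j,1)$ following \cite{Branson:92}, together with the matching computation of the up/down ratios that produce the clean $J\pm\tfrac12\pm r$ factors. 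Once those $\N^*\N$ differences are in hand, the telescoping above directly assembles $Z_\e(r,j,q)$ in the stated form.
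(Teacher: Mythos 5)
Your proposal is correct and follows essentially the same route as the paper: the paper gives no separate proof of Theorem~\ref{form}, since the transition diagram with edge ratios $(J+\tfrac12+r)/(J+\tfrac12-r)$, $(n-2k+1-2r)/(n-2k+1+2r)$, and $-1$, together with the normalization $\m_{\cV_\e(0,1)}=\e$, is exactly the argument you reconstruct, and your telescoping of the $j$-ladder into the Gamma quotient and packaging of the $q=0,1$ cases into the prefactor $(n-2k+1+2(2q-1)r)/(n-2k+1+2r)$ is the intended derivation. You also correctly identify that the only substantive input is the computation of the $\N^*\N$ eigenvalue differences on the relevant $\spin(n+1)$ types following \cite{Branson:92}, which the paper likewise defers to that reference.
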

\begin{rmk} If $2r=1$ and $k=1$, $Z_\e(1/2,j,q)=\e\cdot\frac{n+2(q-1)}{n}\cdot J\cdot \frac{2}{n+2}=\frac{2}{n+2}\cdot\cR^{(1)}$ on $\cV_\e(j,q)$. Here $\cR^{(1)}$ is the Rarita-Schwinger operator ((\ref{HS})).
\end{rmk}
To get all odd order conformally invariant differential operators for $k\ge 1$, we consider the following convenient operators (\cite{Branson:99}) on spinor-forms:
\begin{align*}
(\tilde{d}\varphi)_{a_0\cdots a_k} &:=\sum_{i=0}^k (-1)^i\N_{a_i}\varphi_{a_0\cdots a_{i-1}a_{i+1}\cdots a_k},\\
(\tilde{\delta}\varphi)_{a_2\cdots a_k}&:=-\N^b\varphi_{ba_2\cdots a_k},\\
(\e(\gamma)\varphi)_{a_0\cdots a_k}&:=\sum_{i=0}^k (-1)^i\gamma_{a_i}\varphi_{a_0\cdots a_{i-1}a_{i+1}\cdots a_k}, \\
(\iota(\gamma)\varphi)_{a_2\cdots a_k}&:=\gamma^b\varphi_{ba_2\cdots a_k},\\
(\mathbb{D}\varphi)_{a_1\cdots a_k}&:=(\iota(\gamma)\tilde{d}+\tilde{d}\iota(\gamma))\varphi)_{a_1\cdots a_k}=-(\tilde{\delta}\e(\gamma)+\e(\gamma)\tilde{\delta})\varphi)_{a_1\cdots a_k}=\dc\varphi_{a_1\cdots a_k}.
\end{align*}
The operator 
\begin{equation*}
P_k:=\frac{n-2k+4}{2}\iota(\gamma)\tilde{d}+\frac{n-2k}{2}(\tilde{d}\iota(\gamma)-\tilde{\delta}\e(\gamma))-\frac{n-2k-4}{2}\e(\gamma)\tilde{\delta}
\end{equation*}
on $\Sigma\ot\wedge^k$ restricted to 
$$
\bbT^k:=\begin{cases}\bbV(\underbrace{\tfrac{3}{2},\dots\tfrac{3}{2}}_{k},\frac{1}{2},\dots ,\frac{1}{2}),\quad k\ge 1\\
\bbV(\tfrac{1}{2},\ldots,\frac{1}{2}),\quad k=0\end{cases}=\{\f\in\S\ot\wedge^k \mid \g^{a_1}\f_{a_1 a_2\cdots a_k}=0\},
$$
is conformally invariant on $\bbT^k$.
\begin{rmk} $(1/(n-2k+2))\cdot P_k|_{\bbT^k}=\bbD+(2/(n-2k+2))\cdot\e(\gamma)\tilde{\delta}$ is the Dirac and Rarita-Schwinger operators when $k=0,1$, respectively. 
\end{rmk}
Since $P_k=(n-2k+2)\dc$ on $\cV_\e(j,1)$, by Theorem \ref{DB} and \ref{form},  
\begin{equation*}
P_k \text{ acts as }\quad \e\cdot (n-2k+2q)\cdot J \quad\text{ on }\cV_\e(j,q), \quad q=0,1.
\end{equation*}
Consider now the operator $T_{k-1}: \mathbb{T}^{k-1}\rightarrow\mathbb{T}^k$ defined by 
\begin{equation*}
T_{k-1}=\frac{1}{k}\tilde{d}+\frac{1}{k(n-2(k-1))}\e(\gamma)\mathbb{D}+\frac{1}{k(n-2(k-1))(n-2(k-1)+1)}\e(\gamma)^2\tilde{\delta}\, .
\end{equation*}
This is the orthogonal projection of $\nabla$ onto $\mathbb{T}^k$ summand ($1/k\cdot\tilde{d}^{\text{top}}_{k-1}$ in \cite{BH:02}):
\begin{equation*}\label{tk}
\mathbb{T}^{k-1}\stackrel{\nabla}{\longrightarrow}T^*S^n\otimes\mathbb{T}^{k-1}\cong_{\spin(n)}
\mathbb{T}^{k-2}\oplus\mathbb{T}^{k-1}\oplus\mathbb{T}^k\oplus\mathbb{Z}^{k-1},\quad 2\le k \le (n-2)/2. 
\end{equation*}
where $\mathbb{Z}^{k-1}\cong_{\text{Spin($n$)}}\mathbb{V}(\tfrac{5}{2},\underbrace{\tfrac{3}{2},\dots,\tfrac{3}{2}}_{k-2},\tfrac{1}{2},\dots\tfrac{1}{2})$.
Note also that the formal adjoint of $T_{k-1}$ is $T^*_{k-1}=\tilde{\delta}$. When $k=1$, $(T_0\u)_a=\N_a\u+\frac{1}{n}\g_a\dc\u$ is the twistor operator (\cite{BH:02}).
\begin{lem}
The second order operator $T_{k-1}T_{k-1}^*$ acts as a scalar
\begin{equation*}
\begin{array}{cl}
0 &\quad \text{ on }\cV_\e(j,1)\, \text{ and}\\ 
\displaystyle{\frac{(n-2k+1)(L^2-(n/2-k+1)^2)}{k(n-2k+2)}} &\quad \text{ on }\cV_\e(j,0)\, . 
\end{array}
\end{equation*}
\end{lem}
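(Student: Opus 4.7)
The plan is to dispose of the two statements in turn: a branching-rule argument for the vanishing on $\cV_\e(j,1)$, and a combination of Schur's lemma with the explicit formula for $T_{k-1}$ for the scalar on $\cV_\e(j,0)$.

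For the first part, I would use the fact that $T_{k-1}^{*}=\tilde{\delta}$ has image in $\Gamma(\bbT^{k-1})$, so $T_{k-1}T_{k-1}^{*}$ automatically vanishes on any $\spN$-isotypic component of $\Gamma(\bbT^{k})$ that does not occur inside $\Gamma(\bbT^{k-1})$. The $\spn$-highest weight of the fiber of $\bbT^{k-1}$ has $\lambda_{k-1}=\tfrac{3}{2}$ and $\lambda_{k}=\tfrac{1}{2}$, whereas the $\spN$-highest weight of $\cV_\e(j,1)$ carries a $\tfrac{3}{2}$ entry in position $k+1$. The interlacing constraint $\alpha_{k+1}\le\lambda_{k}=\tfrac{1}{2}$ from the branching rule of Section \ref{VB} then rules out $\cV_\e(j,1)$ from $\Gamma(\bbT^{k-1})$, so $T_{k-1}T_{k-1}^{*}\equiv 0$ there.

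For the scalar on $\cV_\e(j,0)$, I first note that this $\spN$-type does satisfy the branching condition and appears with multiplicity one in both $\Gamma(\bbT^{k-1})$ and $\Gamma(\bbT^{k})$. Since $T_{k-1}$ is a $\spN$-intertwiner, Schur's lemma forces $T_{k-1}|_{\cV_\e(j,0)}=c\cdot u$ for some scalar $c$ and a fixed $\spN$-unitary $u$; consequently $T_{k-1}T_{k-1}^{*}$ on $\cV_\e(j,0)\subset\bbT^{k}$ and $T_{k-1}^{*}T_{k-1}=\tilde{\delta}\,T_{k-1}$ on $\cV_\e(j,0)\subset\bbT^{k-1}$ share the same eigenvalue $|c|^{2}$, which I would compute on the $\bbT^{k-1}$ side. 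Substituting the explicit three-term formula for $T_{k-1}$ and simplifying with $\iota(\gamma)=0$, $\bbD=\dc$, the anticommutation $\tilde{\delta}\,\e(\gamma)+\e(\gamma)\tilde{\delta}=-\bbD$, and Theorem \ref{DB} (which on $\bbT^{k-1}$ yields $\N^{*}\N=\dc^{2}-n(n-1)/4-(k-1)$), the composition $\tilde{\delta}\,T_{k-1}$ collapses to a polynomial in $\dc$ plus scalar constants. Feeding in the eigenvalue of $\dc$ on $\cV_\e(j,0)$ from Theorem \ref{form} at $2r=1$ (the quantity denoted $L$) produces the stated $(n-2k+1)(L^{2}-(n/2-k+1)^{2})/(k(n-2k+2))$.

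The principal obstacle will be the spinor-form algebra: each of $\tilde{\delta}\tilde{d}$, $\tilde{\delta}\,\e(\gamma)\bbD$, and $\tilde{\delta}\,\e(\gamma)^{2}\tilde{\delta}$ generates Clifford-commutator and Riemann-curvature byproducts that only recombine into a manifest scalar after systematic use of the constraint $\iota(\gamma)=0$ and the parallelism of the $S^{n}$ curvature tensor. Once this reduction is completed, the prefactors $(n-2k+1)$ and $1/(k(n-2k+2))$ in the answer arise directly from the normalization constants built into the definition of $T_{k-1}$, while the difference of squares $L^{2}-(n/2-k+1)^{2}$ records the interplay between the Dirac-type eigenvalue and the Clifford shift.
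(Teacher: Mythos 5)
Your proposal is correct and follows essentially the same route as the paper: the vanishing on $\cV_\e(j,1)$ is a branching-rule observation (the paper dismisses it as clear), and the nonzero scalar is obtained by passing to $T^*_{k-1}T_{k-1}$ on the matching isotypic component of $\Gamma(\bbT^{k-1})$ and reducing the explicit three-term formula, via the Clifford and $S^n$-curvature identities, to $\frac{1}{k}\bigl(\N^*\N+(n-k+\frac32)(k-1)-\frac{1}{n-2k+2}\dc^2\bigr)$, which Theorem \ref{DB} turns into a polynomial in $\dc^2$. The one caveat is that the curvature computation you defer as the ``principal obstacle'' is exactly the substance of the paper's proof, so a complete write-up would still need to carry it out.
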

\begin{proof}
$T_{k-1}T^*_{k-1}$ clearly annihilates $\cV_\e(j,1)$ type. Assume that $T_{k-1}T^*_{k-1}\f=\l\f$ for $\f\in\cV_\e(j,0)$. Then $T^*_{k-1}T_{k-1}T^*_{k-1}\f=\l T^*_{k-1}\f$ and $T^*_{k-1}\f\in\cV_\e(j,1)$ over $\bbT^{k-1}$.	So we take $\y\in\cV_\e(j,1)$ over $\bbT^{k-1}$ and compute $T^*_{k-1}T_{k-1}\y$.
\begin{align*}
&T^*_{k-1}T_{k-1}\y_{a_1\cdots a_{k-1}}\\
&\quad =\frac{1}{k}\sum_{i=0}^{k-1}(-1)^{i+1}\left(\N^{a_0}\N_{a_i}\y_{a_0\cdots a_{i-1}a_{i+1}\cdots a_{k-1}}+\frac{1}{n-2k+2}\N^{a_0}\g_{a_i}\dc\y_{a_0\cdots a_{i-1}a_{i+1}\cdots a_{k-1}}\right)\\
&\quad =\frac{1}{k}\left(\N^*\N-\frac{1}{n-2k+2}\dc^2\right)\y_{a_1\cdots a_{k-1}}+\frac{1}{k}\underbrace{\sum_{i=1}^{k-1}(-1)^{i+1}\N^{a_0}\N_{a_i}\y_{a_0\cdots a_{i-1}a_{i+1}\cdots a_{k-1}}}_{A}\\
&\quad\quad +\frac{1}{k(n-2k+2)}\underbrace{\sum_{i=1}^{k-1}(-1)^{i+1}\N^{a_0}\g_{a_i}\dc\y_{a_0\cdots a_{i-1}a_{i+1}\cdots a_{k-1}}}_{B}.
\end{align*}
For $A$, we compute
\begin{align*}
&(-1)^{i+1}\N^{a_0}\N_{a_i}\y_{a_0\cdots a_{i-1}a_{i+1}\cdots a_{k-1}}=(-1)^{i+1}(\N_{a_i}\N^{a_0}+\cR^{a_0}{}_{a_i})\y_{a_0\cdots a_{i-1}a_{i+1}\cdots a_{k-1}}\\
&\quad =(-1)^{i+1}\cR^{a_0}{}_{a_i}\y_{a_0\cdots a_{i-1}a_{i+1}\cdots a_{k-1}}=(n-k+3/2)\y_{a_1\cdots a_{k-1}},
\end{align*}
where $\cR$ is the spin curvature (Theorem \ref{DB}). 
Similar computation shows that $B=0$. Thus, on $\cV_\e(j,1)$ over $\bbT^{k-1}$, $T^*_{k-1}T_{k-1}$ is
\begin{align*}
\frac{1}{k}\left(\N^*\N+\left(n-k+\frac{3}{2}\right)(k-1)-\frac{1}{n-2k+2}\dc^2\right).
\end{align*}
By Theorem \ref{DB}, this proves our claim.
\end{proof} 
Putting the above observations together, we have
\begin{thm} The differential operator $D_{2l+1,k}:\bbT^k\to\bbT^k$ defined by 
\begin{equation*}
D_{2l+1,k}:=\frac{1}{n-2k+2}P_k\prod_{i=1}^l \left(\frac{1}{(n-2k+2)^2}P_k^2-i^2\cdot\id-c_i\cdot T_{k-1}T^*_{k-1}\right),
\end{equation*}
where 
$$
c_i=\frac{16ki^2}{(n-2k+2)(n-2k+2-2i)(n-2k+2+2i)}
$$
is conformally invariant of order $2l+1$.
\end{thm}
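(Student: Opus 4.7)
The plan is to invoke the spectrum-generating framework of the preceding sections: any order-$(2l+1)$ intertwinor on $\bbT^k$ is a $K$-map and hence acts as a scalar on each $\spin(n+1)$-isotypic summand $\cV_\e(j,q)$ in the $K$-decomposition, with those scalars determined up to a single overall normalization by the intertwining recursion, i.e. by the spectral function $Z_\e(l+\tfrac{1}{2},j,q)$ of Theorem \ref{form}. Consequently, it suffices to show that the manifestly $K$-equivariant operator $D_{2l+1,k}$ acts on each $\cV_\e(j,q)$ (with $q\in\{0,1\}$, $j\ge 0$, $\e=\pm 1$, $J=\tfrac{n}{2}+1+j$) as a scalar proportional to $Z_\e(l+\tfrac{1}{2},j,q)$, with proportionality constant independent of $(j,q,\e)$.

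The $q=1$ case is immediate from what is already recalled: using the $P_k$-eigenvalue $\e(n-2k+2q)J$ together with the vanishing of $T_{k-1}T^*_{k-1}$ on $\cV_\e(j,1)$ (preceding lemma), the prefactor $\tfrac{1}{n-2k+2}P_k$ contributes $\e J$ and each inner factor reduces to $J^2-i^2$, giving
$$
D_{2l+1,k}\big|_{\cV_\e(j,1)} = \e\, J\prod_{i=1}^l (J^2 - i^2).
$$
For $q=0$, $P_k$ has eigenvalue $\e(n-2k)J$ and $T_{k-1}T^*_{k-1}$ contributes the nonzero scalar from the lemma. The core of the argument is an algebraic identity: setting $m=n-2k+2$ and using $(\tfrac{n}{2}-k+1)^2 = m^2/4$, I would clear denominators in the inner factor
$$
\tfrac{(n-2k)^2}{(n-2k+2)^2}J^2 - i^2 - c_i\cdot\tfrac{(n-2k+1)\bigl(J^2-(\tfrac{n}{2}-k+1)^2\bigr)}{k(n-2k+2)}
$$
to exhibit both the coefficient of $J^2$ and the $J$-independent term as $\pm m^2\bigl((m-2)^2-4i^2\bigr)$, driven by the numerical identity $(m-2)^2 + 4(m-1) = m^2$. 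The prescribed $c_i$ is exactly calibrated to force this common factoring, so that the inner factor collapses to the clean form
$$
\tfrac{(n-2k-2i)(n-2k+2i)}{(n-2k+2-2i)(n-2k+2+2i)}(J^2 - i^2).
$$

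Once this identity is in hand, the product over $i$ telescopes to $\tfrac{(n-2k-2l)(n-2k+2)}{(n-2k)(n-2k+2+2l)}$; combining with the $P_k$-prefactor $\e\tfrac{n-2k}{n-2k+2}J$ yields
$$
D_{2l+1,k}\big|_{\cV_\e(j,0)} = \e\cdot\tfrac{n-2k-2l}{n-2k+2+2l}\cdot J\prod_{i=1}^l (J^2-i^2).
$$
The $q=0$-to-$q=1$ ratio is $\tfrac{n-2k-2l}{n-2k+2+2l}$, matching the $q$-dependent prefactor in Theorem \ref{form} at $r=l+\tfrac{1}{2}$, while the shared $J$-dependence $J\prod_i(J^2-i^2)$ matches the gamma-quotient $\Gamma(J+l+1)/\Gamma(J-l)$ there. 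Hence $D_{2l+1,k}$ agrees with $Z_\e(l+\tfrac12,j,q)$ up to a single $(j,q,\e)$-independent constant, proving conformal invariance of order $2l+1$. The main obstacle is the algebraic simplification of the $q=0$ inner factor — recognizing that $c_i$ is precisely what makes the $J^2$-coefficient and the constant term acquire the common factor $(m-2-2i)(m-2+2i)$ — after which the telescoping and matching are routine bookkeeping.
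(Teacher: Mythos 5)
Your proposal is correct and follows essentially the same route as the paper: compute the scalar action of $D_{2l+1,k}$ on the two isotypic types $\cV_\e(j,1)$ and $\cV_\e(j,0)$ using the $P_k$-eigenvalue $\e(n-2k+2q)J$ and the $T_{k-1}T^*_{k-1}$-eigenvalues from the preceding lemma, obtain $\e J\prod_{i=1}^l(J^2-i^2)$ and $\e\frac{n-2k-2l}{n-2k+2+2l}J\prod_{i=1}^l(J^2-i^2)$ respectively, and match these against $Z_\e(l+\tfrac12,j,q)$ of Theorem \ref{form}. The paper simply asserts these two eigenvalues and concludes; you have supplied the algebraic verification (the factoring of the inner factor via the choice of $c_i$ and the telescoping product) that the paper leaves implicit, and your identities check out.
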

\begin{proof} The operator $D_{2l+1,k}$ acts as a constant
$$
\begin{cases} J(J^2-1^2)\cdots (J^2-l^2) &\quad \text{on}\; \cV_\e(j,1)\\
\dfrac{n-2k-2l}{n-2k+2+2l}J(J^2-1^2)\cdots (J^2-l^2) &\quad \text{on}\; \cV_\e(j,0)
\end{cases}
$$
Thus by the theorem \ref{form}, $D_{2l+1,k}$ is a constant multiple of $Z_\e(l+1/2,j,q)$.
\end{proof}
\textbf{Case III: $\bf n$ even}\\
\\
Let $E=\eye\g^1(1-2\e(d\r)\i(\ptl\r))$ where $\g^1=\g(d\r)$ ((\ref{FTS})), $\e$ is the exterior multiplication, and $\i$ is the interior multiplication. $E$ changes chairality of the spinor, since $\g^1:\S_\pm\to\S_\mp$. It is readily verified that $E^2=\id$. And for $\F\in\bbT^k$, 
\begin{align*}
\g^j(E\Phi)_{ji_2\cdots i_k}&=\sqrt{-1}\g^j\g^1(\Phi_{ji_2\cdots i_k}-2\delta^1_j\Phi_{1i_2\cdots i_k})\\&=\sqrt{-1}(-\g^1\g^j-2g^{1j})(\Phi_{ji_2\cdots i_k}-2\delta^1_j\Phi_{1i_2\cdots i_k})\\
&=\sqrt{-1}(2\g^1\g^1\Phi_{1i_2\cdot i_k}-2g^{11}\Phi_{1i_2\cdot i_k}+4g^{11}\Phi_{1i_2\cdot i_k})\\
&=0 .
\end{align*}
Thus $E:\bbT^k_\pm\to\bbT^k_\mp$ with $E^2=\id$. On $\bbT^0_\pm=\S_\pm$,  $E=\eye\g^1$. We also compute
\begin{align*}
\mathcal{L}_Y E&=\sqrt{-1}(\underbrace{\mathcal{L}_Y \g^1}_{=0}\cdot (1-2\varepsilon(d\r)\iota(\partial \r))+\g^1\mathcal{L}_Y(1-2\varepsilon(d\r)\iota(\partial \r)))=\sqrt{-1}\g^1\mathcal{L}_Y(1-2\varepsilon(d\r)\iota(\partial \r))\\
&=-2\sqrt{-1}\g^1\mathcal{L}_Y\varepsilon(d\r)\iota(\partial \r))=-2\sqrt{-1}\g^1\{\varepsilon(d\r)\iota([Y,\partial\r])+\varepsilon(d(Y\r))\iota(\partial\r)\}\\
&=-2\sqrt{-1}\g^1(-\cos\r\,\varepsilon(d\r)\iota(\partial\r)+\cos\r\,\varepsilon(d\r)\iota(\partial\r))=0 .
\end{align*}
Thus the intertwining relation for the exchanged operator $EA$ is exactly the same as that of $A$ ((\ref{Int-rel})). 
\begin{equation*}\label{rel-1}
EA\left(\tilde{\cL}_X+\left(\frac{n}2-r\right)\omega\right)=
\left(\tilde{\cL}_X+\left(\frac{n}2+r\right)\omega\right)EA.
\end{equation*}

We first consider $EA:\S_\pm\to\S_\pm$. Let $\cV(j)=\cV(1/2+j,1/2,\ldots,1/2)$.
With respect to the following diagram
\begin{equation*}
\begin{array}{c}
\cV(j+1)\\
\ua \\
\boxed{\cV(j)}\\
\da\\
\cV(j-1)
\end{array}
\end{equation*}
we get
\begin{equation*}
\begin{array}{c}
(J+1/2+r)/(J+1/2-r)\\
\ua\\
\bullet\\
\da\\
(-J+1/2+r)/(-J+1/2-r)\\
\end{array}
\end{equation*}
where $J=n/2+j$. 
Choosing a normalization $\m_{{}_{\cV(0)}}=\e$, we have
\begin{thm} The unique spectral function $Z(r,j)$ on $\cV_\e(j)$ is up to normalization
\begin{equation*}
Z(r,j)=\cdot\frac{\G(J+\frac{1}{2}+r)\G(\frac{n}{2}+\frac{1}{2}-r)}{\G(J+\frac{1}{2}-r)\G(\frac{n}{2}+\frac{1}{2}+r)},\quad j=0,1,2,\ldots. 
\end{equation*}
\end{thm}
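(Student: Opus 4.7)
The plan is to follow the spectrum-generating template already used in the previous cases. Since the intertwining relation (\ref{rel-1}) for $EA$ is formally identical to (\ref{Int-rel}), Lemmas \ref{lem1} and \ref{lem2} let me rewrite it in the form
$$EA\left(\tfrac12[\N^*\N,\w]-r\w\right)=\left(\tfrac12[\N^*\N,\w]+r\w\right)EA.$$
Compressing this relation between $\a=\cV(j)$ and each neighboring summand $\b$ occurring in $\w\f$, and canceling the projection $|_\b\w|_\a$, I obtain
$$\frac{\m_\b}{\m_\a}=\frac{\tfrac12(\N^*\N|_\b-\N^*\N|_\a)+r}{\tfrac12(\N^*\N|_\b-\N^*\N|_\a)-r}.$$
The eigenvalues of $\N^*\N$ on each $\spin(n+1)$-summand are computed via the standard branching/Casimir formula (\cite{Branson:92}) and reproduce exactly the transition quotients tabulated in the diagram preceding the theorem.

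Next I telescope. With $J=n/2+j$, the upward arrow gives
$$\frac{\m_{j+1}}{\m_j}=\frac{J+\tfrac12+r}{J+\tfrac12-r}.$$
Starting from the normalization $\m_{\cV(0)}$, induction on $j$ yields
$$\m_j=\m_{\cV(0)}\prod_{i=0}^{j-1}\frac{\tfrac{n}{2}+i+\tfrac12+r}{\tfrac{n}{2}+i+\tfrac12-r},$$
and rewriting $\prod_{i=0}^{j-1}(a+i)=\G(a+j)/\G(a)$ with $a=\tfrac{n}{2}+\tfrac12\pm r$ converts this product to the claimed ratio of Gamma functions. Consistency with the downward arrow is automatic: substituting $-J$ for $J$ produces $(J-\tfrac12-r)/(J-\tfrac12+r)$, which is precisely the reciprocal of the upward ratio based at $j-1$; hence both arrows encode the same two-term recursion, and $Z(r,j)$ is uniquely determined by the single normalization constant.

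The only point requiring care is that the $\spin(n)$-branching here is especially sparse: the highest weight $(\tfrac12+j,\tfrac12,\ldots,\tfrac12)$ admits movement only in its first slot, so the diagram has no side arrows and the recursion is genuinely one-dimensional. Once this is observed, the proof reduces to the Gamma-function bookkeeping sketched above, and there is no substantive obstacle beyond verifying that the two $\N^*\N$ eigenvalue differences match the tabulated upward and downward quotients.
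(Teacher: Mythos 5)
Your proposal is correct and follows essentially the same route the paper takes (the paper states this theorem without a written proof, relying on the telescoping of the transition quantities in the preceding diagram): compress the intertwining relation for $EA$ between adjacent $\spin(n+1)$ summands, read off the upward quotient $(J+\tfrac12+r)/(J+\tfrac12-r)$, and convert the telescoped product into the stated ratio of Gamma functions, with the downward arrow giving the reciprocal recursion as a consistency check. Your added observation that the diagram is one-dimensional in the even case (no $\cV_{-\e}$ side arrow, since the $\spin(n+1)$ weight for $n+1$ odd has no sign ambiguity in its last slot) correctly accounts for the difference from Case I.
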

\begin{cor} (\cite{BO:06}) The differential operator $D_{2l+1}:\S_\pm\to\S_\mp$ defined by 
\begin{equation*}
D_{2l+1}:=\dc\cdot\prod_{p=1}^l (\dc^2-p^2):\S_\pm\to\S_\mp
\end{equation*}
is conformally invariant of order $2l+1$.
\end{cor}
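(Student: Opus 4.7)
My plan is to follow the template of the odd-dimensional corollary (Case I, $k=0$), with one added wrinkle: in even dimensions the Dirac operator $\dc$ interchanges chirality, so I would pass through the involution $E$ introduced just above the statement in order to bring everything to a single chirality summand where the spectral function $Z(r,j)$ lives.

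First I would compute $\dc^2$ on the $\spN$-isotypic summand $\cV(j) = \bbV(1/2+j, 1/2, \ldots, 1/2)$ sitting inside $\S_\pm$. By Theorem \ref{DB} with $k=0$ one has $\dc^2 = \N^*\N + n(n-1)/4$, and the standard Casimir computation of $\N^*\N$ on the sphere yields the scalar value $\dc^2|_{\cV(j)} = J^2$ with $J = n/2 + j$. Since $\dc$ is $K$-equivariant and $\cV(j)$ is $\spN$-irreducible with multiplicity one in each chirality component, $\dc$ restricts to an isomorphism from the copy of $\cV(j)$ inside $\S_\pm$ to the copy inside $\S_\mp$, whose square equals $J^2\cdot\id$.

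Consequently $D_{2l+1}$ restricted to $\cV(j)\subset\S_\pm$ equals $\prod_{p=1}^l(J^2-p^2)\cdot\dc$, since each factor $\dc^2-p^2$ acts as the scalar $J^2-p^2$. Pre-composing with $E:\S_\mp\to\S_\pm$ yields a $K$-equivariant endomorphism $ED_{2l+1}$ of $\S_\pm$ acting on each $\cV(j)$ as the scalar $\e\cdot J\cdot\prod_{p=1}^l(J^2-p^2)$ for some chirality-dependent sign $\e=\pm 1$. Up to an overall normalization independent of $j$, this matches exactly the spectral function $Z(l+1/2,j)$ of the previous theorem, so by the uniqueness of the spectral function at each $K$-type, $ED_{2l+1}$ satisfies the intertwining relation (\ref{Int-rel}) with $2r=2l+1$.

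Finally, since $E$ was shown above to commute with $\cL_Y$ (and obviously with the conformal-factor multiplier), and since $E^2=\id$, the intertwining property of $ED_{2l+1}$ transfers back to $D_{2l+1}=E\cdot(ED_{2l+1})$ itself, yielding conformal invariance of order $2l+1$. The main obstacle is the chirality-sign bookkeeping: one must verify that the sign introduced by $\dc$ between chirality components combines with the sign introduced by $E$ to produce precisely the $\e$ appearing in $Z$. This calibration only needs to be done on a single summand (say $\cV(0)$), after which the spectrum-generating diagram of the previous theorem propagates the result to all $j$.
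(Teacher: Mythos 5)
Your proposal is correct and follows essentially the same route as the paper: compose $D_{2l+1}$ with the chirality-exchange operator $E$ to obtain an endomorphism of $\S_\pm$, identify its action on each $\cV(j)$ (via $\dc^2=J^2$ from Theorem \ref{DB}) with a constant multiple of the spectral function $Z(l+1/2,j)$, and then use $E^2=\id$ together with $\cL_Y E=0$ to transfer the intertwining property back to $D_{2l+1}$ itself. The paper's proof is a one-line version of exactly this argument; your write-up just supplies the calibration details it leaves implicit.
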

\begin{proof} $E\dc\cdot\prod_{p=1}^l (\dc^2-p^2)$ is a constant multiple of $Z(l+1/2,j)$. So $D_{2l+1}:\S_\pm\to\S_\mp$ is a differential intertwinor as well.
\end{proof}
Next we consider $EA:\bbT^k_\pm\to\bbT^k_\pm$ for $k\ge 1$. Let $\cV(j,q)=\cV(3/2+j,3/2,\ldots,3/2,\underbrace{1/2+q}_{(k+1)^\text{st}},1/2,\ldots,1/2)$.
With respect to the diagram
\begin{equation*}
\begin{array}{ccc}
&&\cV(j+1,1)\\
&&\ua \\
\cV(j,0)&\leftarrow&\boxed{\cV(j,1)}\\
&&\da\\
&&\cV(j-1,1)\\
\end{array}
\end{equation*}
we get
\begin{equation*}
\begin{array}{ccc}
&&(J+\frac{1}{2}+r)/(J+\frac{1}{2}-r)\\
&&\ua\\
(n-2k+1-2r)/(n-2k+1+2r)&\leftarrow&\bullet\\
&&\da\\
&&(-J+\frac{1}{2}+r)/(-J+\frac{1}{2}-r)\\
\end{array}
\end{equation*}
where $J=n/2+1+j$.
Choosing a normalization $\m_{{}_{\cV(0,1)}}=1$, we have
\begin{thm}\label{form-1} The unique spectral function $Z(r,j,q)$ on $\cV(j,q)$ is up to normalization
\begin{align*}
&Z(r,j,q)=\frac{n-2k+1+2(2q-1)r}{n-2k+1+2r}\cdot\frac{\G(J+\frac{1}{2}+r)\G(\frac{n}{2}+\frac{3}{2}-r)}{\G(J+\frac{1}{2}-r)\G(\frac{n}{2}+\frac{3}{2}+r)},\\
&\quad q=0,1, \quad j=0,1,2,\ldots. 
\end{align*}
\end{thm}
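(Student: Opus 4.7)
The strategy parallels the proof of Theorem~\ref{form}, with the simplification that no chirality-flip arrow appears in the even-dimensional diagram. First, I would fix the normalization $\mu_{\cV(0,1)}=1$ and exploit the up-arrow transition quantity $(J+\frac{1}{2}+r)/(J+\frac{1}{2}-r)$ with $J=n/2+1+j$ to telescope over $j$. Writing the resulting product $\prod_{i=0}^{j-1}(n/2+3/2+i+r)/(n/2+3/2+i-r)$ as a ratio of Pochhammer symbols and converting via $(a)_j=\G(a+j)/\G(a)$, I would obtain
\[
\mu_{\cV(j,1)} = \frac{\G(J+\frac{1}{2}+r)\,\G(\frac{n}{2}+\frac{3}{2}-r)}{\G(J+\frac{1}{2}-r)\,\G(\frac{n}{2}+\frac{3}{2}+r)},
\]
which is precisely $Z(r,j,1)$, since the scalar prefactor $\frac{n-2k+1+2(2q-1)r}{n-2k+1+2r}$ evaluates to $1$ at $q=1$.

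Second, I would apply the horizontal transition $\mu_{\cV(j,0)}/\mu_{\cV(j,1)}=(n-2k+1-2r)/(n-2k+1+2r)$ to pass to the $q=0$ isotypic component; the resulting scalar prefactor matches the claimed formula at $q=0$, since $(2q-1)=-1$ there. This covers the only two values $q\in\{0,1\}$ appearing in the diagram.

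Third, uniqueness of $Z$ follows once I confirm consistency of the recursion: the down-arrow ratio $(-J+\frac{1}{2}+r)/(-J+\frac{1}{2}-r)$ from $\cV(j,1)$ to $\cV(j-1,1)$ is the reciprocal of the up-arrow ratio with $J$ replaced by $J-1$, so all telescoping paths on the T-shaped lattice $\{\cV(j,q)\}$ agree. The main bookkeeping task is to align the shift $J=n/2+1+j$ with the gamma-function arguments; no further geometric input is required beyond the intertwining relation already compressed between isotypic components, together with the structure constants supplied by the diagram preceding the statement.
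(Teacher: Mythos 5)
Your proposal is correct and follows exactly the argument the paper intends: the theorem is read off by telescoping the transition quantities of the preceding diagram over $j$ starting from the normalization $\mu_{\cV(0,1)}=1$, converting the product to Gamma functions, and applying the single horizontal ratio $(n-2k+1-2r)/(n-2k+1+2r)$ to reach $q=0$. Your bookkeeping with $J=n/2+1+j$ and the check that the prefactor reduces to $1$ at $q=1$ both match the stated formula, so nothing further is needed.
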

The proof of Lemma \ref{HS2} also shows that $P_k^2$ is a constant multiple of $\dc^2$ on $\cV(j,1)$. Thus $P_k^2$ acts as 
$$
(n-2k+2q)^2\cdot J^2 \quad\text{ on }\cV(j,q), \quad q=0,1.
$$
\begin{thm} The differential operator $D_{2l+1,k}:\bbT^k_\pm\to\bbT^k_\mp$ defined by
\begin{equation*}
D_{2l+1,k}:=\frac{1}{n-2k+2}P_k\prod_{i=1}^l \left(\frac{1}{(n-2k+2)^2}P_k^2-i^2\cdot\id-c_i\cdot T_{k-1}T^*_{k-1}\right),
\end{equation*}
where 
$$
c_i=\frac{16ki^2}{(n-2k+2)(n-2k+2-2i)(n-2k+2+2i)}
$$
is conformally invariant of order $2l+1$.
\end{thm}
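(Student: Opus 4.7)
The plan is to parallel the proof of the analogous theorem in Case II (n odd, $k \ge 1$), using the spectral function $Z(r,j,q)$ of Theorem \ref{form-1} in place of $Z_\e(r,j,q)$. The strategy is to show that $D_{2l+1,k}$ acts on each $\spin(n+1)$ isotypic summand $\cV(j,q)$ as a scalar proportional to $Z(l+1/2,j,q)$; conformal invariance of order $2l+1$ then follows from Theorem \ref{form-1} together with the already-verified fact that the exchanged operator $E\circ(\cdot)$ satisfies the same intertwining relation as an order $2r$ intertwinor.

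First I would collect the eigenvalue data on each $\cV(j,q)$. The statement just above the theorem gives that $P_k^2$ acts as $(n-2k+2q)^2\cdot J^2$ on $\cV(j,q)$ with $J=n/2+1+j$, so $(1/(n-2k+2))P_k$ contributes the correct $J$-dependent factor upon taking the square root in the sense of eigenvalues (its sign is tracked by whether the operator is post-composed with $E$). For the operator $T_{k-1}T^*_{k-1}$, the argument of the lemma in Case II carries over without change: it depends only on the spin curvature on $S^n$ (which is dimension-independent on the round sphere), Theorem \ref{DB}, and the Clifford-trace condition defining $\bbT^{k-1}$. Consequently $T_{k-1}T^*_{k-1}$ annihilates $\cV(j,1)$ and acts as a scalar depending only on $J$ on $\cV(j,0)$, with the same formula as before.

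Plugging these into the product defining $D_{2l+1,k}$, I would then verify that on $\cV(j,1)$ the operator acts as $J\prod_{i=1}^l(J^2-i^2)$ (up to an overall normalization), while on $\cV(j,0)$ the precise choice $c_i=16ki^2/[(n-2k+2)(n-2k+2-2i)(n-2k+2+2i)]$ is exactly what makes the bracketed factors combine to yield $\frac{n-2k-2l}{n-2k+2+2l}J\prod_{i=1}^l(J^2-i^2)$. Comparing the ratio of the $q=0$ and $q=1$ values with the $(n-2k+1+2(2q-1)r)/(n-2k+1+2r)$ prefactor in $Z(r,j,q)$ at $r=l+1/2$ confirms that $E\,D_{2l+1,k}$ is a scalar multiple of $Z(l+1/2,j,q)$ on every isotypic summand.

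The main obstacle is the chirality issue: $D_{2l+1,k}$ sends $\bbT^k_\pm$ to $\bbT^k_\mp$, whereas the spectral function $Z(r,j,q)$ was derived for a chirality-preserving intertwinor $EA$. I would handle this exactly as in the $k=0$ corollary earlier in Case III: since $E^2=\id$ and $\cL_Y E=0$ (so $E$ commutes with $\tilde{\cL}_X$ for the conformal vector field $Y$), the intertwining relation \nn{Int-rel} for $EA$ is equivalent to that for $A$. Thus $E\,D_{2l+1,k}$ being a constant multiple of $Z(l+1/2,j,q)$ shows it is a chirality-preserving intertwinor of order $2l+1$, and multiplying by $E$ gives a chirality-exchanging intertwinor, namely a constant multiple of $D_{2l+1,k}$ itself. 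This concludes that $D_{2l+1,k}:\bbT^k_\pm\to\bbT^k_\mp$ is conformally invariant of order $2l+1$.
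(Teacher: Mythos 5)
Your proposal is correct and follows essentially the same route as the paper: the paper's (one-sentence) proof likewise observes that $\frac{1}{n-2k+2}EP_k\prod_{i=1}^l\bigl(\frac{1}{(n-2k+2)^2}P_k^2-i^2\cdot\id-c_i\cdot T_{k-1}T^*_{k-1}\bigr)$ is a constant multiple of $Z(l+1/2,j,q)$ from Theorem \ref{form-1}, with the chirality issue resolved by pre-composing with $E$ exactly as you describe. You have simply made explicit the eigenvalue bookkeeping ($P_k^2$, $T_{k-1}T^*_{k-1}$ on each $\cV(j,q)$, and the matching of the $q=0$ versus $q=1$ ratio with the prefactor of $Z$ at $r=l+1/2$) that the paper leaves to the reader.
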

\begin{proof} The operator
$$
\frac{1}{n-2k+2}EP_k\prod_{i=1}^l \left(\frac{1}{(n-2k+2)^2}P_k^2-i^2\cdot\id-c_i\cdot T_{k-1}T^*_{k-1}\right)
$$ 
is a constant multiple of $Z(l+1/2,q)$ in Theorem \ref{form-1}. 
\end{proof}
To show an example of the theorem, let us take $k=1$ and $l=1$. Then we get $$
D_{3,1}=\frac{1}{n} P_1\left(\frac{1}{n^2} P_1^2-\id-\frac{16}{n(n-2)(n+2)}T_0T_0^*\right).
$$
The third order conformally invariant differential operator $S_3$ on the twistor bundle ($\bbT^1$ for $n$ odd and $\bbT^1_\pm$ for $n$ even) over a general curved manifold shown in \cite{Branson:01} is
\begin{equation*}
(S_3\f)_i=\left(\left\{\frac{n+2}{4n^2}P_1^3-\frac{4}{n(n-2)}T_0T^*_0\cdot P_1\right\}\f\right)_i+\text{LOT},
\end{equation*}
where
\begin{align*}
\text{LOT}=&-\frac{n+2}{4}J\g_i\N^j\f_j+V_i{}^j\g^k\N_k\f_j+(n+2)V_i{}^k\g_k\N^j\f_j+(n+1)V^{jk}\g_i\N_k\f_j\\
&-\frac{n(n+2)}{2}V^{jk}\g_k\N_j\f_i+(n-1)V^{jk}\g_k\N_i\f_j+V^{jl}\g_{ikl}\N^k\f_j+\frac{n}{2}(\N^j J)\g_i\f_j\\
&-\frac{n(n+2)}{4}(\N^j J)\g_j\f_i+n(\N^k V_i{}^j)\g_k\f_j.
\end{align*}
Here we used
$$
J=\frac{\text{Scal}}{2(n-1)},\quad V=\frac{r-Jg}{n-2},\quad \g_{ijk}=\g_{(i}\g_{j}\g_{k)},
$$
where Scal is the scalar curvature, $r$ is the  Ricci curvature, $g$ is the metric tensor, and $\g_{ijk}$ is the skew-symmetrization of $\g_i\g_j\g_k$ over $i,j,k$.

On the standard sphere, LOT simplifies to $-\dfrac{n+2}{4}P_1$.
Thus $S_3=\dfrac{n(n+2)}{4} D_{3,1}$.

\newpage

\vspace{1cm}
\noindent
Department of Mathematics\\
University of North Dakota\\
Grand Forks ND 58202 USA\\
email: doojin.hong@und.edu

\end{document}